\documentclass[10pt]{amsart}
\usepackage{amscd,graphicx,epsfig}
\usepackage[colorlinks=true, pdfstartview=FitV, linkcolor=blue, citecolor=blue, urlcolor=blue]{hyperref}
\usepackage{amsmath}
\usepackage{amssymb}
\usepackage{comment}
\usepackage{amsmath}
\usepackage{mathtools}
\usepackage{hyperref}
\usepackage{subcaption}
\usepackage{graphicx}
\usepackage{tikz}
\usepackage{tikz-cd}
\usetikzlibrary{matrix,arrows,positioning,calc,chains}
\usepackage{enumitem}

\title[Characterization of $n$-dimensional normal affine  $\SL_n$-varieties]{Characterization of $n$-dimensional normal affine $\SL_n$-varieties}
\author{ANDRIY REGETA}
\address{\noindent Institut f\"{u}r Mathematik, Friedrich-Schiller-Universit\"{a}t Jena, \newline
\indent  Jena 07737, Germany}
\email{andriyregeta@gmail.com}

\thanks{The author was supported by the SNF (Schweizerischer National\-fonds), project number P2BSP2\_165390.}


\newtheorem{theorem}{Theorem}
\newtheorem{corollary}{Corollary}
\newtheorem{lemma}{Lemma}
\newtheorem{proposition}{Proposition}
\newtheorem{definition and proposition}{Definition and proposition}

\setcounter{theorem}{-1}
\setcounter{page}{1}

\theoremstyle{definition}
\newtheorem{definition}{Definition}
\newtheorem{remark}{Remark}

\newcommand{\name}[1]{\textsc{#1\/}}

\newcommand{\spec}{\operatorname{Spec}}

\newcommand{\Der}{\mbox{Der}}
\renewcommand{\d}{{\partial}}

\DeclareMathOperator{\J}{J}

\DeclareMathOperator{\Ve}{Vec}

\DeclareMathOperator{\Lie}{Lie}
\DeclareMathOperator{\Aff}{Aff}
\DeclareMathOperator{\Aut}{Aut}
\DeclareMathOperator{\SAut}{SAut}

\DeclareMathOperator{\GL}{GL}
\DeclareMathOperator{\SL}{SL}

\DeclareMathOperator{\PSL}{PSL}

\DeclareMathOperator{\SO}{SO}
\DeclareMathOperator{\OO}{O}

\DeclareMathOperator{\jac}{jac}
\DeclareMathOperator{\U}{U}

\frenchspacing

\def \itt #1,#2:{\medskip\item[$\bullet$] %
     page\ \ignorespaces#1, line\ \ignorespaces#2:\ \ignorespaces}

\begin{document}

\begin{abstract}
We show that any normal  irreducible affine $n$-dimensional $\SL_n$-variety $X$ is determined by its automorphism group seen as an ind-group  in the category of normal irreducible affine varieties. In other words,
if $Y$ is an irreducible affine normal algebraic variety such that $\Aut(Y) \simeq \Aut(X)$ as an ind-group, then $Y \simeq X$ as a variety.  If we drop the condition of normality on $Y$,  then this statement fails.
In case $n \ge 3$,  the result above  holds true if we replace $\Aut(X)$ by $\U(X)$, where $\U(X)$
is the subgroup of $\Aut(X)$ generated by all  one-dimensional  unipotent subgroups. In dimension $2$ we have some  interesting exceptions.
\end{abstract}

\maketitle


\section{Introduction and Main Results}

Our base field  is the field of complex numbers $\mathbb{C}$. For an affine variety $X$ the automorphism group $\Aut(X)$ has the structure of an ind-group. We will shortly recall the basic definitions and results in  Section  \ref{Preliminaries}. The classical example is $\Aut(\mathbb{A}^n)$, $n>1$, the group of automorphisms of the affine $n$-space $\mathbb{A}^n$.
Recently, \name{Hanspeter Kraft} proved the following  result which shows that the affine $n$-space is determined by its automorphism group (see \cite{Kr15}).

\begin{theorem}\label{main0}
Let $Y$ be a connected affine variety. If $\Aut(Y) \simeq \Aut(\mathbb{A}^n)$ as ind-groups, then 
$Y \simeq \mathbb{A}^n$ as varieties. 
\end{theorem}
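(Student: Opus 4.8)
The plan is to fix an isomorphism of ind-groups $\Phi\colon \Aut(\mathbb{A}^n)\xrightarrow{\sim}\Aut(Y)$ and to use it to transport geometric information from $\mathbb{A}^n$ onto $Y$. The basic mechanism is that an ind-group isomorphism carries algebraic subgroups to algebraic subgroups and respects their dimension, connectedness, the inclusion lattice, normalizers, and conjugacy; it also respects the intrinsic dichotomy between a one-dimensional torus and a one-dimensional unipotent group $\mathbb{C}^+$, since only the former has nontrivial torsion. Thus every subgroup of $\Aut(\mathbb{A}^n)$ that I can characterize \emph{purely in group-theoretic terms} has an image in $\Aut(Y)$ with the same abstract structure, and the whole strategy is to accumulate enough such characterizations to reconstruct $Y$.

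First I would recover the integer $n$. A maximal torus of $\Aut(\mathbb{A}^n)$ has rank exactly $n$: the diagonal torus of $\GL_n\subset\Aut(\mathbb{A}^n)$ has rank $n$, and by the linearization of maximal torus actions on affine space (Bia\l ynicki-Birula) every torus of $\Aut(\mathbb{A}^n)$ is conjugate into $\GL_n$, so none has larger rank. Applying $\Phi$ to such a maximal torus $T$ yields a faithful action of $(\mathbb{C}^*)^n$ on $Y$, whence $\dim Y\ge n$. The more delicate task is to locate inside $\Aut(\mathbb{A}^n)$, in a group-theoretic way, the \emph{finite-dimensional} affine group $\Aff_n=\GL_n\ltimes V$, where $V\cong\mathbb{C}^n$ is the group of translations. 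I would try to single out the standard $\GL_n$ and the translations $V$ among the algebraic subgroups normalized by $T$, using the $T$-weights: $T$ acts on $\Lie V$ by a basis of the character lattice and on the Lie algebra of $\GL_n$ by the roots, so the pair $(\GL_n,V)$ is distinguished by its weight combinatorics relative to $T$. Transporting the chosen subgroups, I obtain on $Y$ a faithful action of a group isomorphic to $\Aff_n$, with a commutative unipotent subgroup $V'\cong\mathbb{C}^n$ on which the reductive part acts by the standard representation.

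Finally, since $\mathbb{A}^n$ is the homogeneous space $\Aff_n/\GL_n$, it would suffice to show that the transported $\Aff_n$-action on $Y$ is transitive with point stabilizer the image of $\GL_n$, for then $Y\cong\Aff_n/\GL_n\cong\mathbb{A}^n$. Here I would use that orbits of a unipotent group on an affine variety are closed: once $\dim Y=n$ is known, a generic orbit of $V'$ is closed of dimension $n$, hence all of $Y$, which exhibits $Y$ as the desired homogeneous space. I expect the genuine obstacles to be the two properties that are geometric rather than manifestly group-theoretic: the upper bound $\dim Y\le n$, and the transitivity of the $V'$-action. Both require transferring a transitivity statement across a purely abstract isomorphism, so the heart of the argument will be an intrinsic characterization---inside the ind-group $\Aut(\mathbb{A}^n)$, and separating the correct copies of $\GL_n$ and $V$ from the vast supply of other $T$-normalized one-parameter unipotent subgroups---of the fact that the translations sweep out all of $\mathbb{A}^n$, so that the same characterization forces $V'$ to sweep out all of $Y$.
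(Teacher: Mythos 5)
A preliminary remark: the paper does not prove Theorem~\ref{main0} at all --- it is quoted from Kraft \cite{Kr15}, and the present paper only adapts its method to the varieties $A_{d,n}$. So your proposal has to be measured against Kraft's argument and against the toolkit this paper assembles for its own theorems (Lemma~\ref{Kr15}, Proposition~\ref{SLn}, Lemma~\ref{rootweights}). Measured that way, your sketch has two genuine gaps, both of which you honestly flag, and both of which are precisely where the content of the theorem lies; as written, neither is closed.

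First, the bound $\dim Y\le n$. Your construction only yields $\dim Y\ge n$ from the faithful torus action. The way this bound is actually obtained is via root subgroups: the root subgroups of $\Aut(\mathbb{A}^n)$ with respect to the diagonal torus are multiplicity-free (pairwise distinct weights, cf.\ Lemma~\ref{Liendo}), weights of root subgroups are preserved under an ind-group isomorphism (the mechanism of Lemma~\ref{rootweights}), and multiplicity-freeness of the $T$-module $\mathcal{O}(Y)^U$ then forces $\dim Y\le\dim T+1$ by Lemma~\ref{Kr15}. Nothing in your write-up supplies a substitute for this. Second, transitivity of $V'$ on $Y$. An abstract isomorphism of ind-groups does not transport transitivity, and your fallback --- ``a generic orbit of $V'$ is closed of dimension $n$, hence all of $Y$'' --- presupposes that $V'$ has an $n$-dimensional orbit. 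A faithful action of $(\mathbb{C}^+)^n$ on an $n$-dimensional variety need not have one: $(s,t)\cdot(x,y)=(x+s+ty,\,y)$ is a faithful $(\mathbb{C}^+)^2$-action on $\mathbb{A}^2$, normalized by the diagonal torus, all of whose orbits are one-dimensional. So the ``weight combinatorics'' selection of $V$ must exclude such subgroups, and even then the transported $V'\subset\Aut(Y)$ comes with no a priori geometric control of its orbits on $Y$ (which, moreover, is only assumed connected, not irreducible or normal). The established route sidesteps this entirely: once $\dim Y=n$ is known, one uses the transported $\SL_n$-action together with the classification of small $\SL_n$-varieties (Proposition~\ref{SLn} for $n\ge3$, Lemma~\ref{SL2} for $n=2$) to reduce to $Y\cong A_{d,n}$ (or its non-normal models), and then distinguishes $d=1$ by comparing root-subgroup weights --- the translations, of weight $\epsilon_i$, occur only for $d=1$. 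To complete your proof you would need either to carry out that classification step or to produce a genuinely group-theoretic certificate that the selected $V$ sweeps out all of $\mathbb{A}^n$ in a way that survives transport; the proposal does neither.
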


Note that this result was generalised in \cite{CRX19} (see also \cite{KRvS19} and in a similar spirit, see \cite[Theorem 1]{LRU20}) where the authors proved Theorem  \ref{main0} under a weaker condition, namely, that groups $\Aut(Y)$ and $\Aut(\mathbb{A}^n)$ are isomorphic only as abstract groups.
Moreover,
recently  Theorem  \ref{main0} was generalized in \cite[Theorem 1.4]{LRU18} (see also \cite[Main Theorem 1]{RvS19}) where it was proved that an affine toric variety different from the algebraic torus is determined by its automorphism group seen as an ind-group in the category of normal affine irreducible varieties.
If we drop the normality condition in \cite[Theorem 1.4]{LRU18}, the situation changes. In this paper we show that for ``most''
$n$-dimensional 
affine normal varieties $X$ endowed with a non-trivial regular $\SL_n = \SL_n(\mathbb{C})$-action, there are infinitely many affine varieties $Y$ such that
$\Aut(Y) \simeq \Aut(X)$ as an ind-group and we classify all such $Y$.

  Let $d > 1$.  Consider the action of $\mu_d =  \{ \xi \in \mathbb{C}^{*} | \xi^d = 1 \}$
on $\mathbb{A}^n$ by scalar multiplication and denote by $ A_{d,n}$ the quotient of $\mathbb{A}^n$ by $\mu_d$. Note that $A_{d,n}$ is normal.
Denote also by
$\pi\colon \mathbb{A}^n \to A_{d,n}$ the quotient map.  This means that $A_{d,n}$ is an affine variety with coordinate ring
\[ \mathcal{O}(A_{d,n}) = \mathbb{C}[x_1,\dots,x_n]^{\mu_d}= \bigoplus_{k=0}^{\infty} \mathbb{C}[x_1,  \dots,x_n]_{dk},\]
the algebra of invariants,  where $\mathbb{C}[x_1,\dots,x_n]_{dk}$  denotes the homogeneous polynomials of degree $dk$. 
Note that $A_{d,n}$ is indeed an orbit space, because $\mu_d$ is finite. For $d > 1$, $A_{d,n}$ has an isolated singularity in $\pi(0)$ and $\pi$ induces   an \'{e}tale covering  $\mathbb{A}^n  \setminus \{ 0 \} \to A_{d,n} \setminus \{ p(0) \}$   with Galois group $\mu_d$.

\begin{remark}
We will see in  Lemma \ref{SL2} and Proposition \ref{SLn-action} that any affine normal variety endowed with a regular non-trivial $\SL_n$-action is isomorphic to either $\SL_2/T$, $\SL_2/N$ or to $A_{d,n}$ for some $d \in \mathbb{N}$, where $T \subset \SL_2$ is the standard subtorus and $N\subset \SL_2$ is the normalizer of $T$. This implies that Theorem \ref{main2} and Theorem \ref{main3} below indeed provide the characterization of $n$-dimensional normal affine $\SL_n$-varieties.
\end{remark}

Consider the affine variety $A_{d,n}^s$ with coordinate ring 
\[ \mathcal{O}(A_{d,n}^s) = \mathbb{C} \oplus \bigoplus_{k=s}^{\infty} \mathbb{C}[x_1,\dots,x_n]_{dk} \subset \mathcal{O}(A_{d,n}), \; s \ge 1.\]
Then the induced morphism $\eta \colon A_{d,n} \to A_{d,n}^s$ is the normalization and has the property that the induced map 
$\eta' \colon A_{d,n} \setminus \{ \star \} \xrightarrow{\sim} 
A_{d,n}^s \setminus \{  \star \}$ is an isomorphism, where $\star$ denotes the points corresponding to the homogeneous maximal ideals. In fact, $\eta$ is $\SL_n$-equivariant, and $A_{d,n} \setminus \{ \star \}$
is an $\SL_n$-orbit.  We prove the following result.

\begin{theorem}\label{main2}
Let $X$ be an irreducible affine variety. Then   $\Aut(X)$ and  $\Aut(A_{d,n})$ are isomorphic as ind-groups if and only if
$X \simeq A_{d,n}^s$ as a variety for some $s \in \mathbb{N}$.  
\end{theorem}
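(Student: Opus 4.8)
The plan is to reduce Theorem \ref{main2} to Proposition \ref{SLn} applied to the normalization of $X$, and then to pin down which non-normal models can occur by a graded-subalgebra analysis.

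First I would transport the relevant group actions across the abstract isomorphism $\Phi\colon \Aut(X)\xrightarrow{\sim}\Aut(A_{d,n})$. The subgroup $\SL_n\subset\Aut(A_{d,n})$ coming from the standard representation is generated by its one-parameter unipotent (root) subgroups, so it lies in $U(A_{d,n})$; together with the centralizing one-dimensional torus $T\cong\mathbb{C}^*$ of scalars it ought to be recoverable intrinsically from the ind-group structure (for example $\SL_n$ as a maximal semisimple subgroup of $U(A_{d,n})$ and $T$ as a component of its centralizer). Pulling these back by $\Phi$ yields an $\SL_n$-action and a commuting $\mathbb{C}^*$-action on $X$, so that $\SL_n$ and $\mathbb{C}^*$ together induce a $\GL_n$-action. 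As in the proof of Theorem \ref{main1}, I would then exploit the near-homogeneity of this action -- on $A_{d,n}$ the complement of the singular point is a single $\SL_n$-orbit -- to conclude that the action on $X$ is nontrivial and that $\dim X = n$.

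With $\dim X = n$ in hand, the normalization $\nu\colon\tilde X\to X$ is a normal affine $n$-dimensional variety carrying the lifted nontrivial $\SL_n$-action, so Proposition \ref{SLn} gives an $\SL_n$-equivariant isomorphism $\tilde X\cong A_{d',n}$ for some $d'\ge 1$. Hence $\mathcal{O}(\tilde X)=R:=\bigoplus_{k\ge 0}\mathbb{C}[x_1,\dots,x_n]_{d'k}$, and $\mathcal{O}(X)\subseteq R$ is a subalgebra over which $R$ is integral. The transported $\mathbb{C}^*$ grades $\mathcal{O}(X)$, and since each homogeneous piece $\mathbb{C}[x_1,\dots,x_n]_{d'k}$ is an irreducible $\SL_n$-module while $\mathcal{O}(X)$ is $\SL_n$-stable, every graded piece of $\mathcal{O}(X)$ is either $0$ or all of $R_{d'k}$. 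Therefore
\[
\mathcal{O}(X)=\bigoplus_{k\in S}\mathbb{C}[x_1,\dots,x_n]_{d'k}
\]
for a submonoid $S\subseteq\mathbb{Z}_{\ge 0}$ with $0\in S$; after absorbing $\gcd(S\setminus\{0\})$ into $d'$ we may assume the degrees occurring actually generate, and then the fact that $R$ is the normalization forces $S$ to be cofinite in $\mathbb{Z}_{\ge 0}$.

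The main obstacle is the last step. A priori $S$ can be any cofinite additive submonoid containing $0$ -- for instance $\{0,2,4,5,6,\dots\}$ also yields a non-normal variety whose normalization is $A_{d',n}$ and whose normalization map is an isomorphism away from the singular point -- whereas the conclusion requires exactly $S=\{0\}\cup\{k: k\ge s\}$ together with $d'=d$. This is precisely where the full force of $\Aut(X)\cong\Aut(A_{d,n})$ must enter: I would describe $\Aut(X)$ as $\{\varphi\in\Aut(A_{d',n}) : \varphi(\mathcal{O}(X))=\mathcal{O}(X)\}$ and show that a gap in $S$ destroys enough one-parameter unipotent subgroups (equivalently, locally nilpotent derivations) that $\Aut(X)$ cannot be isomorphic to $\Aut(A_{d,n})$; dually, the gapless models $A_{d',n}^{s}$ must be shown to satisfy $\Aut(A_{d',n}^{s})\cong\Aut(A_{d',n})$, so that no automorphisms are lost. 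Finally $d'=d$ would be read off from an invariant preserved by $\Phi$ -- for example the weights of the central torus $T$ on the minimal generating module, or equivalently the type of the isolated singularity -- which separates the $A_{d,n}$ for distinct $d$. I expect the verification that a gap in $S$ strictly shrinks the automorphism ind-group to be the most delicate and technically hardest part.
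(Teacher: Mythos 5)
Your skeleton coincides with the paper's: transport the $\SL_n$-action and the torus through the isomorphism, pass to the normalization $\tilde X\cong A_{d',n}$ via Proposition \ref{SLn}, and use the fact that each $\mathbb{C}[x_1,\dots,x_n]_{d'k}$ is an irreducible $\SL_n$-module to write $\mathcal{O}(X)=\bigoplus_{k\in S}\mathbb{C}[x_1,\dots,x_n]_{d'k}$ for a finitely generated submonoid $S$ (this is Proposition \ref{SLn-action}). However, the step you yourself flag as ``the most delicate and technically hardest part'' --- showing that $S$ has no internal gaps and that $d'=d$ --- is exactly where the content of the theorem lies, and your proposal only asserts that you \emph{would} show it. As written this is a genuine gap, not a proof: nothing in your text rules out, say, $S=\{0,2,3,4,\dots\}\cdot 1$ with $d'=1$ against $d=2$.

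The paper closes this gap by a two-way comparison of root subgroups rather than by analyzing which automorphisms of $A_{d',n}$ preserve $\mathcal{O}(X)$ directly. The ingredients are: the transported $(n-1)$-torus can be conjugated into the standard torus (Lemma \ref{torus}, resting on Bialynicki-Birula's linearization of $(n-1)$-dimensional torus actions on $\mathbb{A}^n$); root subgroups of $U(A_{d,n})$ with respect to $T_d$ lift to the monomial root subgroups of Liendo's classification, which are multiplicity-free (Lemmas \ref{Lie+} and \ref{Liendo}); and an algebraic isomorphism preserves the weights of root subgroups (Lemma \ref{rootweights}). Then, writing the degree monoid of $\mathcal{O}(X)$ with generators $d_i$ in degrees $\ge k_i$, the subgroups $(x_1+cx_2^{s_id_i+1},x_2,\dots,x_n)$ act on $\mathcal{O}(X)$ for all $s_i\ge k_i$ and must correspond to root subgroups of $U(A_{d,n})$ of equal weight, which forces $d\mid d_i$; conversely the root subgroups $(x_1+cx_2^{kd+1},x_2,\dots,x_n)$ of $U(A_{d,n})$ must act on $\mathcal{O}(X)$, forcing $k_id_i+kd$ into the monoid for every $k$ and hence filling in all degrees above $s=\min_i k_id_i$. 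This simultaneously yields $d'=d$ and $S=\{0\}\cup\mathbb{Z}_{\ge s}$, i.e.\ $X\cong A_{d,n}^s$. Two smaller remarks: your derivation of $\dim X=n$ from ``near-homogeneity'' is under-justified (the paper gets $\dim X\le n$ from multiplicity-freeness of $\mathcal{O}(X)^U$ as a module over the transported torus via Lemma \ref{Kr15}, and $\dim X\ge n$ from the transported $n$-dimensional torus, which in dimension $2$ also excludes $\SL_2/T$ and $\SL_2/N(T)$); and the verification that $\Aut(A_{d,n}^s)\cong\Aut(A_{d,n})$, while relevant to the classification claimed in the abstract, is not needed for the implication stated in the theorem.
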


Theorem \ref{main2} and
the following result shows that
$\SL_2/T$ and $\SL_2/N$ are the only affine $n$-dimensional $\SL_n$-varieties (except $\mathbb{A}^n$) that are determined by their automorphism groups in the category of affine irreducible varieties.

\begin{theorem}\label{main3}
Let $X$ be an irreducible variety such that $\Aut(X) \simeq \Aut(\SL_2/T)$ respectively $\Aut(X) \simeq \Aut(\SL_2/N)$ as ind-groups. Then $X \simeq \SL_2/T$ respectively $X \simeq \SL_2/N$
as varieties. 
\end{theorem}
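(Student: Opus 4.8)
The plan is to reconstruct, from the abstract ind-group $\Aut(X)$, a semisimple group acting on $X$ together with enough geometry to identify $X$ via the classification recalled above (see \cite{Pop73}). First I would locate inside $\Aut(\SL_2/T)$ (resp.\ $\Aut(\SL_2/N(T))$) the image $\bar G \cong \PSL_2$ of $\SL_2$ acting by left translations; since $-I \in T \subset N(T)$, this left action factors through $\PSL_2$. The key group-theoretic input is that a maximal connected semisimple subgroup of the ind-group is unique up to conjugacy, so any ind-group isomorphism $\Aut(X) \xrightarrow{\sim} \Aut(\SL_2/T)$ carries it to a corresponding subgroup of $\Aut(X)$, yielding a nontrivial $\PSL_2$- (hence $\SL_2$-) action on $X$. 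I would then pin down $\dim X = 2$: the transported $\SL_2$ acts on $\SL_2/T$ with a full $2$-dimensional orbit, and I would show that the corresponding numerical datum — the dimension of a generic orbit, or the rank of a maximal torus of the ind-group together with the structure of its unipotent part — forces $\dim X = 2$.

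Once $X$ is a $2$-dimensional affine $\SL_2$-variety, I want to apply the cited classification, which requires normality. Since the theorem allows non-normal $X$, I would first establish that $X$ is smooth, hence normal, by transporting homogeneity: $\PSL_2 \subseteq \Aut(\SL_2/T)$ already acts transitively, so $\SL_2/T$ is homogeneous under its full automorphism group, and the same must then hold for $X$; a variety on which its automorphism ind-group acts transitively is smooth. Equivalently, I would rule out $A_{d,2}$ and any non-normal surface by a group-theoretic obstruction: such surfaces carry an $\Aut$-fixed point (the singular point $\pi(0)$, resp.\ the point of the non-normal locus), whereas $\Aut(\SL_2/T)$ has no global fixed point because $X$ is homogeneous. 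With normality in hand, the classification leaves only $X \cong A_{d,2}$, $X \cong \SL_2/T$, or $X \cong \SL_2/N(T)$.

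It remains to separate the three candidates group-theoretically. The variety $A_{d,2}$ is eliminated by the fixed-point argument above. To distinguish $\SL_2/T$ from $\SL_2/N(T)$ I would use the centralizer of the semisimple subgroup: the group of $\SL_2$-equivariant automorphisms of $\SL_2/H$ equals $N_{\SL_2}(H)/H$, which for $H = T$ gives the Weyl group $\mathbb{Z}/2$ (realized by right translation by a representative of the nontrivial Weyl element), while for $H = N(T)$, since $N(T)$ is self-normalizing, it is trivial. This centralizer is intrinsic to the ind-group and is preserved by the isomorphism, so it cleanly separates the two homogeneous spaces; together with the étale double cover $\SL_2/T \to \SL_2/N(T) = (\SL_2/T)/(\mathbb{Z}/2)$ this identifies $X$ in each case.

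The main obstacle I anticipate is twofold. First, making precise and transporting the homogeneity step: transitivity is a property of the action of $\Aut(X)$ on $X$, not of the abstract ind-group, so I must genuinely recover the action (or an equivalent fixed-point criterion) from the group structure — this is the delicate point already present in the earlier theorems and is where the bulk of the work lies. Second, controlling the automorphism ind-groups themselves: $\SL_2/T$ and $\SL_2/N(T)$ are Gizatullin-type surfaces whose automorphism groups are infinite-dimensional, generated by the many $\mathbb{G}_a$-actions coming from their $\mathbb{A}^1$-fibrations, so I must argue that the semisimple subgroup and its centralizer are detectable inside this large group rather than attempt a full description of $\Aut$.
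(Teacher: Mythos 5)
Your overall architecture (locate the $\PSL_2$, bound the dimension, invoke Popov's classification, then separate the candidates by the centralizer $N_{\SL_2}(H)/H$ of the semisimple subgroup) matches the skeleton of the paper's argument, and the centralizer computation is exactly the paper's Lemma~\ref{lemma2}. But two of your steps have genuine gaps. First, the dimension bound: neither the generic orbit dimension of the transported $\PSL_2$ nor the rank of a maximal torus forces $\dim X=2$ --- a $\PSL_2$ can act faithfully with two-dimensional orbits on varieties of any dimension, and the maximal tori of $\Aut(\SL_2/T)$ are only one-dimensional, so rank gives no upper bound. The mechanism the paper actually uses is Lamy's amalgamated product description of $\Aut(\SL_2/T)$, from which one reads off that the root subgroups with respect to a maximal torus $T''$ are multiplicity-free with weights $1,2,3,\dots$ (Proposition~\ref{proposition2}); this multiplicity-freeness transports through the ind-group isomorphism, and Kraft's lemma (Lemma~\ref{Kr15}: if $\mathcal{O}(X)^U$ is a multiplicity-free $T$-module for some root subgroup $U$, then $\dim T\le\dim X\le\dim T+1$) then yields $\dim X\le 2$. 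Without this input your step has no engine.

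Second, eliminating $A_{d,2}$ and the non-normal surfaces. Your ``$\Aut$-fixed point'' obstruction is a property of the action, not of the abstract ind-group, and you concede you cannot transport it; as written this step does not close. It is in fact delicate: $U(\SL_2/T)$ and $U(A_{2,2})$ are \emph{algebraically isomorphic} (Proposition~\ref{propHanspeter} --- both are amalgamated products of $\PSL_2$ with isomorphic Jonqui\`eres-type factors), so no argument at the level of unipotently generated subgroups, orbits, or fixed points of $\mathbb{C}^+$-actions can separate these two surfaces. The separation must use the full automorphism group, and the invariant that works is precisely the one you already introduced for the other purpose: $\Aut^S(X)$ is $\mathbb{Z}/2$ for $\SL_2/T$, trivial for $\SL_2/N(T)$, but $\mathbb{C}^*$ for $A_{d,2}$ and for its non-normal models $A_{d,2}^s$ (Lemma~\ref{lemma2}(c)). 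You should apply the centralizer computation to all candidates, not only to distinguish the two homogeneous spaces. Finally, to handle non-normal $X$ at all you need the paper's reduction through the normalization (every automorphism lifts, giving an algebraic embedding $U(X)\hookrightarrow U(\tilde X)$, Lemma~\ref{closedembedding}) together with Popov's classification applied to $\tilde X$ and the observation (Proposition~\ref{orbits}) that a two-dimensional irreducible $\SL_2$-variety whose normalization is $\SL_2/T$ or $\SL_2/N(T)$ is already normal; your sketch skips this reduction entirely.
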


For an affine variety $X$ we  denote by $\U(X) \subset \Aut(X)$  the subgroup generated by  the one-dimensional unipotent subgroups. We do not know whether $\U(X)$ 
has the structure of an ind-subgroup (i.e., whether $\U(X) \subset \Aut(X)$ is closed). That is why we introduce the definition of an \emph{algebraic isomorphism}.  This is  an isomorphism $\phi \colon \U(X) \xrightarrow{\sim} \U(Y)$ such that for any  subgroup $U \subset \U(X)$, where  $U$ is a closed one-dimensional unipotent subgroup of  $\Aut(X)$,  
the image $\phi(U) \subset \Aut(Y)$ is a closed one-dimensional unipotent subgroup and $\phi|_{U}\colon U \xrightarrow{\sim} \phi(U)$
is an isomorphism of algebraic groups.

\begin{theorem}\label{main4}
Let $X$ be  $A_{d,n}$, $\SL_2/T$ or $\SL_2/N$  and  $Y$ be an irreducible affine variety. Assume that there is an  algebraic  isomorphism $\U(X) \xrightarrow{\sim} \U(Y)$.  Then

(a) if $X \simeq A_{2,2}$, then $Y \simeq \SL_2/T$ or $Y \simeq A^s_{2,2}$ for some $s \in \mathbb{N}$,

(b) if $X \simeq \SL_2/T$, then $Y \simeq \SL_2/T$ or $Y \simeq A^s_{2,2}$ for some $s \in \mathbb{N}$,

(c) if   $X \simeq A_{4,2}$, then   $Y \simeq \SL_2/N$ or $Y \simeq A^s_{4,2}$ for some $s \in \mathbb{N}$,

(d) if   $X \simeq \SL_2/N$, then $Y \simeq \SL_2/N$ or $Y \simeq A^s_{4,2}$ for some $s \in \mathbb{N}$,

(e) if $X = A_{d,n}$, where $(d,n) \notin \{ (2,2), (2,4) \}$, then $Y \simeq A^s_{d,n}$ for some $s \ge 1$.
\end{theorem}

\subsection*{Acknowledgement:} The author  thanks  both referees for important remarks and suggested improvements. The  author would also like to thank
\name{Hanspeter Kraft} for his support  while  writing  this paper. Finally,
the author  is grateful to \name{Michel Brion} who suggested  a number of  improvements and \name{Mikhail Zaidenberg} for useful discussions. 

\section{Preliminaries}\label{Preliminaries}
  
    \subsection{Ind-groups}
The notion of an ind-group goes back to Shafarevich who called such objects \emph{infinite dimensional groups} (see \cite{Sh66}). We refer to  \cite{Kum02} and  \cite{FK16}  for basic notions in this context.

\begin{definition}
By an \emph{ind-variety}  we mean a set $V$ together with an ascending filtration $V_0 \subset V_1 \subset V_2 \subset \dots \subset V$ such that the following holds:

(1) $V = \bigcup_{k \in \mathbb{N}} V_k$;

(2) each $V_k$ has the structure of an affine algebraic variety;

(3) for all $k \in \mathbb{N}$ the subset $V_k \subset V_{k+1}$ is closed in the Zariski-topology.
\end{definition}

A morphism from an ind-variety $V = \bigcup_{k \in \mathbb{N}} V_k$ to an ind-variety $W = \bigcup_{m \in \mathbb{N}} W_m$ is a map $\mbox{$\phi \colon V \to W$}$  such that for any $k$ there is an $m$ such that 
$\phi(V_k) \subset W_m$ and  such  that the induced map $V_k \to W_m$ is a morphism of algebraic varieties. \emph{Isomorphisms} of ind-varieties are defined in the obvious way.

Two filtrations $V = \bigcup_{k \in \mathbb{N}} V_k$ and $V = \bigcup_{k \in \mathbb{N}} V'_{k}$ are called \emph{equivalent} if for every $k$ there is an $m$ such that $V_k \subset V'_{m}$ is a closed
 subvariety as well as $V'_{k} \subset V_m$. 
 
An ind-variety $V$ has a natural topology: a subset $S \subset V$ is open, (resp. closed), if $S_k = S \cap  V_k \subset V_k$ is open, (resp. closed), for all $k$.  
A locally closed subset $S \subset V$ has the induced structure of an ind-variety. It is called an \emph{ind-subvariety}.
A subset $S \subset V$ that is a closed subset of some $V_k$ is called an \emph{algebraic subset}.

The product of two ind-varieties is defined in the usual way. This allows to give the following definition.

\begin{definition}
An ind-variety $G$ is said to be an \emph{ind-group} if the underlying set $G$ is a group such that the map $G \times G \to G$, 
 $(g,h) \mapsto gh^{-1}$, is a morphism.
\end{definition}

An ind-group $G$ is called \emph{connected} if for every $g \in G$ there is an irreducible curve $C$ and a morphism $C \to G$ whose image contains the neutral element $e$ and $g$.

A closed subgroup $H$ of $G$ (i.e., $H$ is a subgroup of $G$ and is a closed subset) is again an ind-group under the  closed ind-subvariety structure on $G$.
A closed subgroup $H$ of an ind-group $G$ is called an \emph{algebraic subgroup} if and only if $H$ is an algebraic subset of $G$.

\begin{proposition}[{\cite[Theorem 0.3.1]{FK16}}]
\label{ind-group}
Let $X$ be an affine variety. Then $\Aut(X)$ has the structure
		of an  ind-group such that for any algebraic group $G$, there is a correspondence between regular $G$-actions on $X$ and ind-group homomorphisms $G \to \Aut(X)$. 
\end{proposition}
If $G$ is an algebraic group acting regularly and faithfully on $X$, then, by Proposition \ref{ind-group}, we can consider $G$ as an algebraic subgroup of $\Aut(X)$. We will often switch between these two points of view.

\subsection{Locally nilpotent derivations and $\mathbb{G}_a$-actions}\label{LND}
	Additive group actions on affine varieties can be described by a
	certain kind of derivations. We recall some of the basics here (see
	\cite{Fre06} for details). Let $\lambda\colon\mathbb{G}_a \to \Aut(X)$ be a
	$\mathbb{G}_a$-action on an affine variety $X$. Such an action induces a
	derivation on the level of regular functions $\mathcal{O}(X)$ by
	\[\delta_\lambda\colon\mathcal{O}(X)\rightarrow\mathcal{O}(X),\qquad
	f\mapsto \left[\frac{d}{ds}\lambda{(s)}^*(f)\right]_{s=0},
	\]
   where $\mathbb{G}_a=\spec(\mathbb{C}[s])$.
	We have that for every $f\in \mathcal{O}(X)$ there exists an
	$k\in \mathbb{N}$ with $\delta_\lambda^k(f)=0$. Derivations that have such a
	property are called {\it locally nilpotent}. Moreover, every
	$\mathbb{G}_a$-action on $X$ arises from  a certain locally nilpotent derivation
	$\delta$ and the $\mathbb{G}_a$-action
	$\lambda_\delta\colon\mathbb{G}_a\times X\rightarrow X$ is obtained from $\delta$
	via
	\[\left(\alpha_\delta(s)\right)^*\colon\OO(X)\rightarrow \OO(X)[s],
	\qquad
	f\mapsto\exp(s\delta)(f):=\sum_{i=0}^\infty\frac{s^i\delta^i(f)}{i!}\,.\]

\section{Automorphisms}

\begin{proposition}\label{prop}
Let $\pi \colon \mathbb{A}^n \to A_{d,n}$. Then
every automorphism of $A_{d,n}$ lifts to an automorphism of $\mathbb{A}^n$ which commutes with each element of $\mu_d$.
\end{proposition}
\begin{proof}
The quotient map $\pi \colon \mathbb{A}^n \to A_{d,n}$ induces a natural embedding of $\mathcal{O}(A_{d,n})$
into $\mathcal{O}(\mathbb{A}^n) = \mathbb{C}[x_1,\dots,x_n]$. So, we assume that 
$\mathcal{O}(A_{d,n})$ is a subring of $\mathbb{C}[x_1,\dots,x_n]$ and equals
$\bigoplus_{k=0}^{\infty} \mathbb{C}[x_1, \dots,x_n]_{dk}$, where $\mathbb{C}[x_1,\dots,x_n]_{dk}$  denotes the homogeneous polynomials of degree $dk$.
Let $\phi  \in \Aut(A_{d,n})$. First we claim that $p_i=\phi^*(x_i^d)$ and $p_j=\phi^*(x_j^d)$ are coprime in $\mathbb{C}[x_1,\dots,x_n]$, where $i \neq j$ and $\phi^*$ is the pull-back of $\phi$. 
Let $p$ be a  common factor of $p_i$ and $p_j$. Then $\tilde{p} = \prod_{g \in \mu_d} gp$  divides  $p_i^d$ and $p_j^d$. By construction it is clear that $\tilde{p} \in \mathcal{O}(A_{d,n})$. Hence, $(\phi^*)^{-1}(\tilde{p})$ is a common factor of $(\phi^*)^{-1}(p_i^d) = x_i^{d^2}$ and $(\phi^*)^{-1}(p_j^d) = x_j^{d^2}$. Therefore, $\tilde{p} \in \mathbb{C}$
and then, $p \in \mathbb{C}$.

  We have  $$ \phi^*(x_i^d)  \phi^*((x_j^{d})^{d-1} )  =\phi^*(x_i^d x_j^{d(d-1)} ) = \phi^*(x_i x_j^{d-1})^{d}$$ which means that $p_i p_j^{d-1} = q^d$ for some $q \in \mathcal{O}(A_{d,n})$. Because $p_i$ is coprime with $p_j$, it follows that  $p_i = q_i^d$ for some $q_i \in \mathbb{C}[x_1,\dots,x_n]$.

Since for an automorphism $\phi\colon A_{d,n} \to A_{d,n}$ we have that $\phi^*(x_i^d) = q_i^d$ for some $q_i \in \mathbb{C}[x_1,\dots,x_n]$, we  define the morphism 
\[ \tilde{\phi} =(q_1,\dots,q_n)\colon \mathbb{A}^n \to \mathbb{A}^n \]
given by the map
\[ (x_1,\dots,x_n)\mapsto  (q_1,\dots,q_n). \]
If $\phi$ is the identity automorphism, then
the restriction of $\tilde{\phi}^*$ to $\mathcal{O}(A_{d,n})$ is the identity and we have that $q_i^d = x_i^d$ which implies that $q_i = w_i x_i$ for some $w_i \in \mathbb{C}^*$, $w_i^d = 1$.  In this case $\tilde{\phi}$ is an automorphism, i.e., the trivial automorphism of $A_{d,n}$ lifts to an automorphism of $\mathbb{A}^n$
which we denote by $\Delta(w_1,\dots,w_n)$.  

Let now  $\theta\colon  A_{d,n} \to A_{d,n}$ be the inverse automorphism of $\phi \in \Aut(A_{d,n})$. Since $\phi \circ \theta$ is the trivial automorphism of $A_{d,n}$, it lifts to an automorphism $\Delta(w_1,\dots,w_n)$ of $\mathbb{A}^n$ and so $\tilde{\phi}$ is an automorphism with the inverse $\tilde{\theta} \circ \Delta(w_1,\dots,w_n)^{-1}$.
To finish the proof we need to show that 
$\tilde{\phi}$ commutes with $\mu_d$.
Indeed, since $\tilde{\phi}^*$ preserves $\mathbb{C}[x_1,\dots,x_n]^{\mu_d}$, we have that
$q_i^d(\xi x_1,\dots,\xi x_n) = 
q_i^d(x_1,\dots,x_n)$, where $\xi \in \mu_d$. Hence, 
\[q_i(\xi x_1,\dots,\xi x_n) = \xi^l q_i(x_1,\dots,x_n)\]
for some  $l =1,\dots,d-1$.  Since polynomial $q_i$ has a linear summand  it follows that $l = 1$.
The proof follows.
\end{proof}

Let $X$ be an affine variety, $H$ be a finite group that acts faithfully on $X$ and let $\pi \colon X \to X  / H$ be the quotient morphism.
Since $H$ acts faithfully, $H$ naturally embeds into $\Aut(X)$ and we identify $H$ with its image in $\Aut(X)$.  
Denote by 
$\Aut^{H}(X) \subset \Aut(X)$ the subgroup of all automorphisms of $X$ which normalize   $H$, i.e., the subgroup of those automorphisms $\phi$ such that $\phi^{-1} \circ H \circ \phi = H$.

\begin{lemma}\label{closedsubgroup}
$(a)$  There is a canonical homomorphism   of groups  $\phi \colon \Aut^H(X) \to\Aut(X /H)$,

$(b)$ if $X$   is normal and contains only finitely many fixed points of  $H$ then every $\mathbb{C}^+$-action 
on $X / H$ lifts to a 
$\mathbb{C}^+$-action on $X$. 
\end{lemma}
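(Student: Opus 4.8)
The plan is to dispatch (a) and (b) formally from the ind-group structure and to put the real work into (c). For (a), fix $h\in H$ and let $c_h\colon\Aut(X)\to\Aut(X)$ be the map $\psi\mapsto\psi\,h\,\psi^{-1}h^{-1}$, where $h$ also denotes the image of $h$ in $\Aut(X)$. Since composition and inversion are morphisms of the ind-group $\Aut(X)$ and $h$ is a fixed element, $c_h$ is a morphism of ind-varieties, so $c_h^{-1}(e)$ is closed; then $\Aut^H(X)=\bigcap_{h\in H}c_h^{-1}(e)$ is a finite intersection of closed subsets and a subgroup, hence a closed ind-subgroup. For (b), if $\alpha\in\Aut^H(X)$ then $\alpha$ commutes with $H$, so $h^*\alpha^*=\alpha^* h^*$ for every $h$, and therefore $\alpha^*$ preserves $\mathcal{O}(X)^H=\mathcal{O}(X/H)$ and induces $\bar\alpha\in\Aut(X/H)$ with $\pi\circ\alpha=\bar\alpha\circ\pi$; the assignment $\alpha\mapsto\bar\alpha$ is visibly a homomorphism. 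To see it is a morphism of ind-groups I would use the functorial description of $\Aut$: a morphism $T\to\Aut^H(X)$ from a variety $T$ is an $H$-equivariant automorphism $\Phi$ of $T\times X$ over $T$; since the quotient by the finite group $H$ commutes with the flat base change along $T\to\mathrm{pt}$, one has $(T\times X)/H=T\times(X/H)$, and $\Phi$ descends to a $T$-automorphism of $T\times(X/H)$, i.e.\ to a morphism $T\to\Aut(X/H)$.

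The content is in (c), and here both hypotheses on $X$ enter. A $\mathbb{C}^+$-action on $X/H$ is the same datum as a locally nilpotent derivation $\delta$ of $\mathcal{O}(X/H)=\mathcal{O}(X)^H$, equivalently a locally nilpotent vector field on $X/H$. Let $F\subset X$ be the fixed-point set of $H$, finite by hypothesis, and set $U:=X\setminus F$ and $V:=(X/H)\setminus\pi(F)$. Then $H$ acts freely on $U$, so $\pi|_U\colon U\to V$ is an \'{e}tale Galois covering with group $H$, and, as in the proof of Proposition \ref{prop}, it carries a canonical isomorphism $\Ve(V)\xrightarrow{\sim}\Ve(U)^H$ between vector fields on $V$ and $H$-invariant vector fields on $U$ (lift via the identification $(\pi|_U)^*T_V\cong T_U$). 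I would use it to lift the restriction $\delta|_V$ to an $H$-invariant vector field $\tilde\delta$ on $U$ that is $\pi$-related to $\delta$.

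It remains to extend $\tilde\delta$ across $F$ and to check it is locally nilpotent. Assuming $\dim X\ge 2$ (the one-dimensional case being elementary), finiteness of $F$ forces $\codim_X F\ge 2$, so normality of $X$ gives $\mathcal{O}(U)=\mathcal{O}(X)$ and, the tangent sheaf of a normal variety being reflexive, $\Ve(U)=\Ve(X)$; hence $\tilde\delta$ extends to a vector field on $X$, still denoted $\tilde\delta$, preserving $\mathcal{O}(X)$. By construction $\tilde\delta$ restricts on $\mathcal{O}(X)^H$ to $\delta$, which is locally nilpotent, and since $\mathcal{O}(X)$ is integral over $\mathcal{O}(X)^H$, a theorem of Vasconcelos shows $\tilde\delta$ is locally nilpotent on all of $\mathcal{O}(X)$. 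Integrating it yields a $\mathbb{C}^+$-action on $X$ which is $\pi$-equivariant, i.e.\ lifts the given action. I expect this extension-and-nilpotency step to be the main obstacle: one must remove the codimension-$\ge 2$ set $F$ without losing the vector field or its local nilpotency, which is exactly where normality of $X$ and finiteness of $\mathrm{Fix}_H(X)$ are both used. A purely algebraic alternative bypasses the covering: extend $\delta$ uniquely along the finite separable field extension $\mathbb{C}(X)/\mathbb{C}(X)^H$, use that $\mathcal{O}(X)$ is the integral closure of $\mathcal{O}(X)^H$ in $\mathbb{C}(X)$ (normality) together with Seidenberg's theorem to keep $\mathcal{O}(X)$ stable, and again apply Vasconcelos' theorem for local nilpotency.
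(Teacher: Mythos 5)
Your proposal is correct and follows essentially the same route as the paper: for (a) the commutator-map argument, for (b) the observation that $\alpha^*$ preserves $\mathcal{O}(X)^H$, and for (c) lifting the vector field over the \'etale locus, extending across the codimension-$\ge 2$ bad set via reflexivity of the tangent sheaf on the normal variety $X$, and invoking Vasconcelos for local nilpotency. The only cosmetic difference is that the paper also removes the singular locus of $X$ (harmless, as it too has codimension $\ge 2$), and your closing algebraic alternative via Seidenberg appears in the paper only as a commented-out variant.
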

\begin{proof}
$(a)$
Let $h \in H$, $f \in \mathcal{O}(X)^H$ and $\phi \in \Aut^{H}(X)$. Then $\phi^* \colon \mathcal{O}(X)  \stackrel{\sim}{\rightarrow}  \mathcal{O}(X)$ is an isomorphism and 
 $$h (\phi^{*}(f)) = \phi^{*} ((\phi^{*})^{-1} \circ h \circ \phi^{*}) (f)  = (\phi^{*} \circ h') (f) = \phi^{*}(f)$$ for some $h' \in H$. Therefore
$\phi^{*}(f) \in \mathcal{O}(X)^H$, which means that $\phi$ induces an automorphism of $X / H$.

$(b)$ follows from \cite[Theorem 1.3]{MM09}.
\end{proof}

Let us recall that a closed algebraic subgroup $U$ of $\Aut(X)$ is a $1$-dimensional unipotent subgroup if $U \simeq \mathbb{C}^+$.

\begin{proposition}\label{filtration}
The  homomorphism   $\phi_d\colon \Aut^{\mu_d}(\mathbb{A}^n) \to \Aut(A_{d,n})$ is surjective
with kernel $\mu_d$. Moreover, every  $1$-dimensional   unipotent subgroup of $\Aut(A_{d,n})$
is the image of some  $1$-dimensional   unipotent subgroup of $ \Aut^{\mu_d}(\mathbb{A}^n)$.
\end{proposition}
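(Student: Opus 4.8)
The plan is to prove the statement in three pieces: first the kernel computation, then surjectivity of $\phi_d$, and finally the lifting of one-dimensional unipotent subgroups. For the \emph{kernel}, I would observe that $\psi \in \Aut^{\mu_d}(\mathbb{A}^n)$ lies in $\ker \phi_d$ precisely when $\psi$ induces the identity on $A_{d,n}$, i.e.\ when $\psi^*$ fixes every $\mu_d$-invariant polynomial $\mathbb{C}[x_1,\dots,x_n]^{\mu_d}$. Since this invariant ring has the same fraction field structure that lets us recover the $x_i$ up to the $\mu_d$-action, $\psi$ must act by an element of $\mu_d$ (scalar multiplication by a $d$-th root of unity), and conversely every such scalar fixes all invariants. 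Hence $\ker \phi_d = \mu_d$, and I should check that this identification is compatible with the ind-group structures so that the homomorphism of Lemma~\ref{closedsubgroup}(b) has the claimed kernel.

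For \emph{surjectivity}, the essential input is already available: Proposition~\ref{prop} says every automorphism $\phi$ of $A_{d,n}$ lifts to an automorphism $\psi$ of $\mathbb{A}^n$. The point to verify is that any such lift $\psi$ actually lies in the subgroup $\Aut^{\mu_d}(\mathbb{A}^n)$, i.e.\ that $\psi$ commutes with the $\mu_d$-action. This should follow because $\psi$ descends to $\phi$ on the quotient, so for each $g \in \mu_d$ the automorphism $\psi^{-1} g \psi$ again descends to the identity on $A_{d,n}$; by the kernel computation just established, $\psi^{-1} g \psi \in \mu_d$. A short argument (using that conjugation by $\psi$ must permute $\mu_d$ and fix the generator, or a connectedness/continuity argument on the finite group $\mu_d$) then gives $\psi^{-1} g \psi = g$, so $\psi \in \Aut^{\mu_d}(\mathbb{A}^n)$ and $\phi_d(\psi) = \phi$. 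This establishes surjectivity.

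For the statement about \emph{unipotent subgroups}, let $U \cong \mathbb{C}^+$ be a one-dimensional unipotent subgroup of $\Aut(A_{d,n})$, equivalently a nontrivial $\mathbb{C}^+$-action on $A_{d,n}$. I would apply Lemma~\ref{closedsubgroup}(c) with $X = \mathbb{A}^n$ and $H = \mu_d$: the variety $\mathbb{A}^n$ is normal and $\mu_d$ acting by scalars has only the single fixed point $0$, so the hypotheses are met, and the $\mathbb{C}^+$-action on $A_{d,n} = \mathbb{A}^n/\mu_d$ lifts to a $\mathbb{C}^+$-action on $\mathbb{A}^n$. This lifted $\mathbb{C}^+$-action commutes with $\mu_d$ by construction (the lift in (c) produces $\mu_d$-invariant, i.e.\ equivariant, derivations), so it defines a one-dimensional unipotent subgroup $\tilde U \subset \Aut^{\mu_d}(\mathbb{A}^n)$ with $\phi_d(\tilde U) = U$.

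The main obstacle I anticipate is the surjectivity-cum-equivariance step: Proposition~\ref{prop} only guarantees \emph{a} lift $\psi$, and one must rule out that the lift fails to normalize the $\mu_d$-action, so the argument genuinely relies on the kernel computation feeding back into surjectivity. A secondary subtlety is matching the lift produced by the \emph{derivation} argument in Lemma~\ref{closedsubgroup}(c) with an honest $\mathbb{C}^+$-\emph{subgroup} of the ind-group $\Aut^{\mu_d}(\mathbb{A}^n)$; this requires knowing that a lifted locally nilpotent derivation on $\mathbb{C}[x_1,\dots,x_n]$ integrates to a $\mathbb{C}^+$-action and that this action is $\mu_d$-equivariant, which is exactly what the $H$-invariance of the lifted derivation provides. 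I would keep the verification that $\phi_d|_{\tilde U}\colon \tilde U \to U$ is an isomorphism of algebraic groups as a routine check, since $\tilde U \cap \mu_d$ is trivial ($\mu_d$ contains no unipotent elements).
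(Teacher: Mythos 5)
Your overall architecture matches the paper's: surjectivity is deduced from Proposition~\ref{prop}, and the lifting of one-dimensional unipotent subgroups from Lemma~\ref{closedsubgroup}(c) applied to $X=\mathbb{A}^n$, $H=\mu_d$, exactly as in the text. The two arguments diverge on the kernel. The paper computes it by hand: an element of the kernel fixes every invariant, in particular each $x_i^d$, which forces $f=(\xi_1x_1,\dots,\xi_nx_n)$ with $\xi_i^d=1$, and the invariants $x_i^{d-1}x_j$ then force all the $\xi_i$ to coincide. You instead invoke the Galois correspondence for $\mathbb{C}(x_1,\dots,x_n)/\mathbb{C}(x_1,\dots,x_n)^{\mu_d}$: an automorphism of $\mathbb{C}[x_1,\dots,x_n]$ fixing the invariant subring pointwise extends to a field automorphism fixing the invariant subfield, hence lies in the Galois group $\mu_d$. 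That is shorter and equally valid. You also address a point the paper passes over in silence: Proposition~\ref{prop} only produces \emph{some} lift $\psi\in\Aut(\mathbb{A}^n)$, and one must check that $\psi$ lies in $\Aut^{\mu_d}(\mathbb{A}^n)$ before concluding that $\phi_d$ is surjective. Your reduction --- $\psi^{-1}g\psi$ acts trivially on the invariant ring, hence lies in $\mu_d$ by the kernel computation --- is the right move and is a genuine improvement in care over the published proof.

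The one step you should tighten is the passage from $\psi^{-1}g\psi\in\mu_d$ to $\psi^{-1}g\psi=g$. Neither of your suggested finishes works as stated: there is no a priori reason why conjugation by $\psi$ should fix a generator of $\mu_d$ (it is only an automorphism of the cyclic group), and a continuity argument on the finite normalizer is not available without further input. A clean repair: since $\pi(0)$ is the unique singular point of $A_{d,n}$ (for $d>1$, $n\ge 2$), the automorphism $\phi$ fixes it, so the lift satisfies $\psi(0)=0$; if $g$ is scalar multiplication by $\xi$, then $d(\psi^{-1}g\psi)_0=(d\psi_0)^{-1}(\xi I)(d\psi_0)=\xi I$ because scalar matrices are central, while scalar multiplication by $\xi'$ has differential $\xi' I$ at $0$, whence $\xi'=\xi$. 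With that one line inserted, your proof is complete.
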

\begin{proof}
The surjectivity of $\phi_d$ follows from Proposition \ref{prop}.  The last claim of the statement follows from 
Lemma \ref{closedsubgroup} (b). What remains is to compute the kernel of $\phi_d$.

It is clear that
$$
\Aut^{\mu_d}(\mathbb{A}^n) = \{ f = (f_1,\dots,f_n) \in \Aut(\mathbb{A}^n) \mid f_i \in \bigoplus_{k=0}^{\infty} \mathbb{C}[x_1,\dots,x_n]_{kd+1},
i=1,\dots,n \}.
$$
Now let $f=(f_1,\dots,f_n) \in \Aut^{\mu_d}(\mathbb{A}^n)$ be such that the map $f'$ induced by   $f$ on $\mathbb{A}^n/\mu_d$ is the identity.
This means that $f'$ acts trivially on $$ \mathcal{O}(\mathbb{A}^n/\mu_d) = \mathbb{C} \oplus \bigoplus_{k \ge 1} \mathbb{C}[x_1,\dots,x_n]_{kd}.$$ 
Hence, $f'(x_i^d) = x_i^d$ for any $i$ which implies that $f = ( \xi_1 x_1,\dots,\xi_n x_n)$, where $\xi_i^d = 1$ for $i = 1,\dots,n$.
In particular, 
$f'(x_i^{d-1} x_j) = x_i^{d-1} x_j$ which implies that  $\xi_i^{d-1}  \xi_j = 1$ for any $i,j$. Because $\xi_i^{d-1} \xi_i = 1$
we conclude that $\xi_i = \xi_j$.  
The claim follows.
\end{proof}

\section{Root subgroups}

Let $G$ be an ind-group, and let $T \subset G$ be a closed torus.

\begin{definition}
A closed subgroup $U \subset G$ isomorphic to $\mathbb{C}^{+}$ and normalized by $T$ is called a \emph{root subgroup} with respect to $T$. The \emph{character} of $T$ on 
$\Lie U \simeq\mathbb{C}$ i.e.,  the algebraic action of $T$ on $\Lie U$  is called the \emph{weight character} of $U$.
\end{definition}

Let $X$ be an affine variety and consider a nontrivial algebraic action of $\mathbb{C}^+$ on $X$, given by $\lambda \colon \mathbb{C}^{+} \to \Aut(X)$. If $f \in \mathcal{O}(X)$ is $\mathbb{C}^{+}$-invariant, then the \emph{modification} $f \cdot \lambda$ of $\lambda$ is defined in the    following way:
$$(f \cdot \lambda)(s)x =\lambda(f(x)s)x$$ 
for $s \in \mathbb{C}$ and $x \in X$.
It is easy to see that this is again a $\mathbb{C}^{+}$-action. In fact,  the corresponding locally nilpotent derivation to $f \cdot \lambda$ is 
$f \delta_{\lambda}$, where $ \delta_{\lambda}$ is the  locally nilpotent derivation which correspond to $\lambda$ (see Section \ref{LND} for details). 
 It is clear that if $X$ is irreducible and $f  \neq 0$, then $f \cdot \lambda$  and $\lambda$ have the same invariants.
If $U \subset \Aut(X)$ is a closed subgroup isomorphic to $\mathbb{C}^{+}$ and if $f \in \mathcal{O}(X)^U$ is a $U$-invariant, then in a similar way we define  the modification $f \cdot U$ of $U$. Choose an isomorphism
$\lambda \colon \mathbb{C}^+ \to U$ and set 
$$f \cdot U = \{ (f \cdot \lambda)(s) \mid s \in \mathbb{C}^+ \}.$$ 
Note that $\Lie(f \cdot U) = f \Lie U  \subset \Ve(X)$, where  $\Ve(X)$ denotes the
Lie algebra of (algebraic) vector fields on $X$, i.e., $\Ve(X) = \Der(\mathcal{O}(X))$, the Lie algebra
of derivations of $\mathcal{O}(X)$.

If a torus $T$ acts linearly and rationally on a vector space $V$, then we call $V$ \emph{multiplicity-free} if the weight spaces $V_{\alpha}$ are all of dimension less than or equal to $1$. 

\begin{lemma} [Lemma 6.2, \cite{Kr15}] \label{Kr15}
Let $X$ be an irreducible affine variety and let $T \subset \Aut(X)$ be a torus. Assume that there exists a root 
subgroup $U \subset \Aut(X)$ with respect to $T$ such that  the $T$-module  $\mathcal{O}(X)^{U}$  is multiplicity-free. Then $\dim T \le \dim X \le \dim T +1.$
\end{lemma}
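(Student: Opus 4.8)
The plan is to prove the two inequalities by essentially the same mechanism: the $T$-action makes the relevant coordinate rings into $\mathfrak{X}(T)$-graded domains, and in a graded domain monomials in weight vectors behave predictably — products of weight vectors are nonzero, and two monomials of the \emph{same} weight are forced to be proportional once the weight space is one-dimensional. Throughout write $\mathfrak{X}(T) \cong \mathbb{Z}^{\dim T}$ for the character lattice and recall that, $T$ being a torus, every rational $T$-module splits into weight spaces.

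For the lower bound $\dim T \le \dim X$, I would use only that $T \subset \Aut(X)$ acts faithfully. Decompose $\mathcal{O}(X) = \bigoplus_{\chi \in \Lambda} \mathcal{O}(X)_\chi$, where $\Lambda$ is the set of weights that actually occur. A point $t \in T$ acts trivially iff $\chi(t) = 1$ for all $\chi \in \Lambda$, so faithfulness forces $\Lambda$ to generate a subgroup of $\mathfrak{X}(T)$ of full rank $\dim T$. Hence $\Lambda$ contains $\mathbb{Z}$-linearly independent characters $\chi_1, \dots, \chi_{\dim T}$; choosing nonzero $f_i \in \mathcal{O}(X)_{\chi_i}$, any polynomial relation among the $f_i$ decomposes by weight, and since distinct exponent vectors give distinct weights (by independence of the $\chi_i$) and each monomial $\prod_i f_i^{a_i}$ is nonzero (as $\mathcal{O}(X)$ is a domain), every coefficient must vanish. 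So the $f_i$ are algebraically independent and $\dim X = \operatorname{trdeg}_\mathbb{C} \mathcal{O}(X) \ge \dim T$.

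For the upper bound $\dim X \le \dim T + 1$, set $A := \mathcal{O}(X)^U$. Because $U$ is a root subgroup, $T$ normalizes $U$ and therefore preserves $A$, so $A = \bigoplus_{\chi \in \Gamma} A_\chi$ is $\mathfrak{X}(T)$-graded, with $\dim A_\chi \le 1$ by the multiplicity-free hypothesis. Two transcendence-degree computations meet here. First, $U \cong \mathbb{C}^+$ acts nontrivially, so its locally nilpotent derivation $\delta$ is nonzero and $A = \ker\delta$; by the standard fact that a nonzero locally nilpotent derivation on an affine domain lowers transcendence degree by exactly one, $\operatorname{trdeg}_\mathbb{C} A = \dim X - 1$. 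Second, I claim $\operatorname{trdeg}_\mathbb{C} A \le \operatorname{rank}\langle\Gamma\rangle \le \dim T$. Since the weight vectors span $A$, it suffices to show any $r+1$ of them are algebraically dependent, where $r = \operatorname{rank}\langle\Gamma\rangle$. Given weight vectors $t_{\chi_0}, \dots, t_{\chi_r}$ there is a nontrivial integral relation $\sum_i a_i \chi_i = 0$; separating positive and negative coefficients, the two monomials $\prod_{a_i > 0} t_{\chi_i}^{a_i}$ and $\prod_{a_i<0} t_{\chi_i}^{-a_i}$ are nonzero (domain) and lie in the same one-dimensional weight space, hence are proportional — a nontrivial algebraic relation. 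Combining the two computations yields $\dim X - 1 \le \dim T$.

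The main obstacle, and the point needing the most care, is the second transcendence bound: $A = \mathcal{O}(X)^U$ need not be finitely generated (this is the Nagata phenomenon for $\mathbb{C}^+$-invariants), so I deliberately avoid presenting $A$ as a finitely generated semigroup algebra and instead argue directly with transcendence degree and the multiplicative relations among homogeneous elements. It is irreducibility of $X$ — making $A$ a domain, so that products of weight vectors never vanish — that turns "multiplicity-free" into the clean statement that equal-weight monomials are proportional, and this is exactly the mechanism driving both inequalities.
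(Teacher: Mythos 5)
Your proof is correct. Note that the paper itself gives no argument for this lemma --- it is quoted verbatim from \cite{Kr15} --- so there is nothing in-paper to compare against; your two inequalities are established by exactly the standard mechanism one would expect there: faithfulness forces the weight lattice of $\mathcal{O}(X)$ to have full rank (lower bound), and for the upper bound the nonzero locally nilpotent derivation gives $\operatorname{trdeg}\,\mathcal{O}(X)^{U}=\dim X-1$ while multiplicity-freeness plus the domain property forces any $\dim T+1$ weight vectors of $\mathcal{O}(X)^{U}$ into a monomial relation, so $\operatorname{trdeg}\,\mathcal{O}(X)^{U}\le\dim T$. Your explicit care about the possible non-finite-generation of $\mathcal{O}(X)^{U}$ (arguing with transcendence degree of the spanning set of weight vectors rather than with a semigroup algebra presentation) is exactly the right precaution and closes the only real gap such an argument could have.
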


The next result is going to be of use  in the sequel and 
can be found in \cite[Theorem 1]{Lie11}. We denote  by $\SAut(\mathbb{A}^n)$ the subgroup of $\Aut(\mathbb{A}^n)$ of the following form
\[\{ f = (f_1,\dots,f_n) \in \Aut(\mathbb{A}^n) \mid \jac(f) = \det [\frac{\d f_i}{\d x_j}]_{i,j} = 1  \}\]
and by $T_n'$ a maximal subtorus of $\SAut(\mathbb{A}^n)$  of the form
\[
\{ (t_1x_1,\dots,t_nx_n) | t_i \in \mathbb{C}^{*}, t_1 \cdot \dots \cdot t_n = 1 \}. 
\]

\begin{lemma}\label{Liendo}
Let $U  \subset \SAut(\mathbb{A}^n)$ be a one-dimensional unipotent subgroup. Then $U$
is  a root subgroup with respect to $T'_n$  if and only if  $U = U_{\lambda} = \{ (x_1,\dots,x_i +cm_i,\dots,x_n) | c \in \mathbb{C} \}$,
where $m_i = x_1^{\lambda_1} \dots x_{i-1}^{\lambda_{i-1}} x_{i+1}^{\lambda_{i+1}}\dots x_n^{\lambda_{n}}$.
The  character $\xi_{\lambda}$  corresponding  to the root subgroup $U$ is the following: $\xi_{\lambda}\colon T'_n \to \mathbb{C}^{*}$,
 $t = (t_1,\dots,t_n) \mapsto t_i t_1^{-\lambda_1}\dots \hat{t}_{i} \dots t_n^{-\lambda_n}$.
\end{lemma}

\begin{remark}\label{char}
The  last lemma can also be expressed in the following way $($see \cite[Remark 2]{KS13}$)$:  there is a bijective correspondence between the $T'_n$-stable one-dimensional unipotent subgroups $U \subset \Aut(\mathbb{A}^n)$ and the characters of $T'_n$ of the form  $\lambda = \sum_j \lambda_j  \epsilon_j$ where one $\lambda_i$ equals $1$ and the others are $\le 0$. We will denote this set of characters by $X_u(T_n')$:
$$
 X_u(T_n') =\{ \lambda = \sum \lambda_j \epsilon_j \mid   \lambda_i =1 \text{ and } \lambda_j \le 0 \; for \; j \neq i \}.
$$
 If $\lambda \in X_u(T_n')$, then $U_{\lambda}$ denotes the corresponding one-dimensional unipotent subgroup normalized by $T_n'$.
\end{remark}

\section{A special subgroup of $\Aut(X)$}\label{section5}

 For any affine variety $X$ consider the normal subgroup $\U(X)$ of $\Aut(X)$ 
generated by closed one-dimensional unipotent subgroups. The group $\U(X)$ was introduced and studied in \cite{AFK13}, where the authors called it the group of special automorphisms of $X$.
 Following  \cite{Kr15}, we introduce the notion of an algebraic homomorphism between these groups.

\begin{definition}
A homomorphism $\phi \colon \U(X) \to \U(Y)$ is algebraic if for any subgroup $U \subset \U(X)$   
such that  $U \subset \Aut(X)$ is closed,  $U  \simeq  \mathbb{C}^{+}$, the image $\phi(U) \subset \Aut(Y)$ is closed and $\phi|_U \colon U \to \phi(U)$ is a homomorphism of algebraic groups. We say that 
$\phi$ is an algebraic isomorphism if 
$\phi$ is an isomorphism of groups and 
 $\phi|_U \colon U \xrightarrow{\sim} \phi(U)$ is an isomorphism of algebraic groups.
\end{definition}

A subgroup $G \subset \U(X)$ is called algebraic if $G \subset \Aut(X)$ is a closed algebraic subgroup.
The next lemma can be found in \cite[Lemma 4.2]{Kr15}.

\begin{lemma}\label{roothomo}
Let $\phi \colon \U(X) \to \U(Y)$ be an algebraic homomorphism. Then, for any algebraic subgroup $G \subset \U(X)$ generated by
one-dimensional unipotent subgroups of $\Aut(X)$, the image $\phi(G)$ is an  algebraic subgroup of $\U(Y)$ and $\phi |_G \colon G \to \phi(G)$ 
is a homomorphism of algebraic groups.
\end{lemma}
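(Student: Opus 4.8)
The plan is to reduce everything to a single finite product decomposition of $G$ and then to argue inside one algebraic ``layer'' of the ind-group $\Aut(Y)$.

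First I would exploit the structure of $G$. Since $G \subset \Aut(X)$ is a closed algebraic subgroup generated by closed one-dimensional unipotent subgroups $U \cong \mathbb{C}^+$, each such $U$ is a closed connected subgroup of $G$. By the classical theorem on the subgroup generated by a family of closed connected subgroups (Borel, \emph{Linear Algebraic Groups}), $G$ is connected and there are finitely many of them, say $U_1,\dots,U_n$, with
\[
G = U_1 U_2 \cdots U_n
\]
as a set. Hence the product map $\mu \colon W := U_1 \times \cdots \times U_n \to G$, $(u_1,\dots,u_n)\mapsto u_1\cdots u_n$, is a surjective morphism from the affine space $W \cong \mathbb{C}^n$. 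On the $Y$-side, algebraicity of $\phi$ gives that each $\phi(U_j)$ is a closed one-dimensional unipotent subgroup of $\Aut(Y)$ and each $\phi|_{U_j}\colon U_j \to \phi(U_j)$ is an isomorphism of algebraic groups. Composing $\phi|_{U_1}\times\cdots\times\phi|_{U_n}$ with the (ind-)multiplication of $\Aut(Y)$ yields a morphism of ind-varieties $\psi\colon W \to \Aut(Y)$, $\psi(u_1,\dots,u_n)=\phi(u_1)\cdots\phi(u_n)$. Since $\phi$ is a homomorphism, $\operatorname{im}\psi = \phi(U_1)\cdots\phi(U_n)=\phi(G)$ and $\phi|_G\circ\mu=\psi$. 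As $W$ is an ordinary variety, $\psi(W)$ lies in a single algebraic piece $\Aut(Y)_m$, and $\psi\colon W \to \Aut(Y)_m$ is a morphism of varieties.

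Next I would prove that $\phi(G)$ is algebraic. By Chevalley's theorem $\phi(G)=\psi(W)$ is a constructible subset of the variety $\Aut(Y)_m$, and it is a subgroup of the ind-group $\Aut(Y)$. Using that left and right translations and inversion are homeomorphisms of $\Aut(Y)$, the closure $\overline{\phi(G)}$ is again a subgroup; it is closed and contained in the closed piece $\Aut(Y)_m$, hence is a closed algebraic subgroup. Finally, the standard argument that a constructible dense subgroup of an algebraic group coincides with its closure applies: a dense open $V \subseteq \overline{\phi(G)}$ with $V \subseteq \phi(G)$ satisfies, for each $g\in\overline{\phi(G)}$, that $gV^{-1}$ is dense open and so meets $V$, whence $g\in V\cdot V\subseteq\phi(G)$. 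Thus $\phi(G)=\overline{\phi(G)}$ is a closed algebraic subgroup of $U(Y)$.

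It remains to show that $\phi|_G\colon G \to \phi(G)$ is a morphism. I would consider $\mu\times\psi\colon W \to G\times\phi(G)$; since $\psi=\phi|_G\circ\mu$ and $\mu$ is surjective, its image is exactly the graph $\Gamma$ of $\phi|_G$, which is constructible by Chevalley. The projection $\Gamma\to G$ is bijective, so over $\mathbb{C}$ (characteristic zero) the induced map $\overline{\Gamma}\to G$ is a dominant, generically injective morphism of irreducible varieties, hence birational; inverting it and projecting to $\phi(G)$ exhibits $\phi|_G$ as a rational map $G \dashrightarrow \phi(G)$ that is a group homomorphism on its domain. By the classical result that a rational homomorphism of algebraic groups is everywhere defined, $\phi|_G$ is a morphism, and therefore a homomorphism of algebraic groups. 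The main obstacle here is organizational rather than computational: because $\Aut(Y)$ is only an ind-group, products of closed subgroups and images of morphisms need not be closed or algebraic a priori, and the filtration pieces $\Aut(Y)_m$ are not subgroups. The point to get right is that $\psi$ has its image inside one algebraic piece $\Aut(Y)_m$, so that the standard constructibility and closure arguments for algebraic groups can be carried out there, while translations keep the relevant closures inside that same piece.
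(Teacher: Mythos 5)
Your argument is correct, but note that the paper does not actually prove this lemma: it is quoted verbatim from \cite[Lemma 4.2]{Kr15}, so there is no in-paper proof to compare against. Your reconstruction follows exactly the lines one would expect (and that Kraft's proof takes): write $G=U_1\cdots U_n$ as a finite product of the generating one-parameter unipotent subgroups via the classical theorem on groups generated by connected subgroups, observe that the resulting map $W=U_1\times\cdots\times U_n\to\Aut(Y)$ is a morphism landing in a single filtration piece $\Aut(Y)_m$ because $W$ is an honest variety, conclude that $\phi(G)$ is a constructible subgroup whose closure is an algebraic subgroup, and then use the standard ``dense constructible subgroup equals its closure'' argument; finally deduce regularity of $\phi|_G$ from its graph. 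Two small remarks. First, Borel's theorem does not say the generating family is finite; it says finitely many of the $U_i$ (with repetitions) suffice for the product decomposition, which is all you use, so just phrase it that way. Second, your ending via birationality and the ``rational homomorphism is everywhere regular'' theorem works, but it is slightly more economical to note that the graph $\Gamma\subset G\times\phi(G)$ is itself a constructible subgroup (being the image of $W$ under $(\mu,\psi)$ and the graph of a homomorphism), hence closed by the same argument already used for $\phi(G)$; then the first projection $\Gamma\to G$ is a bijective homomorphism of algebraic groups over $\mathbb{C}$, hence an isomorphism, and $\phi|_G=\pr_2\circ(\pr_1)^{-1}$ is a morphism. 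Either way the proof is sound.
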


Let $X$ be an affine variety and let $\eta\colon \tilde{X} \to X$ be a normalization map. It is well-known that any automorphism of $X$ lifts uniquely to the automorphism of $\tilde{X}$. Indeed,
for a given automorphism $\phi\colon X \to X$, the composition $\phi\circ \eta \colon \tilde{X} \to X$ is a morphism, which by the universal property of normalization factors through a morphism $\tilde{\phi} \colon \tilde{X} \to \tilde{X}$ such that $\phi \circ \eta = \eta \circ \tilde{\phi}$. It remains to argue that $\tilde{\phi}$ is an automorphism. But for the same reason, the inverse $\phi^{-1}$ lifts to an automorphism $\psi\colon \tilde{X} \to \tilde{X}$. Since $\eta\colon \tilde{X} \to X$ is birational, the compositions $\psi \circ \tilde{\phi}$ and $\tilde{\phi} \circ \psi$ are equal to the identity on a dense open subset of an irreducible variety, hence everywhere. This shows that $\eta$ induces a well-defined injective homomorphism $\tilde{\eta} \colon \Aut(X)  \hookrightarrow \Aut(\tilde{X})$. Moreover, in \cite[Proposition 12.1.1]{FK16} it is proved that $\tilde{\eta}$ is a \emph{closed immersion} of ind-groups, i.e.,  $\tilde{\eta}(\Aut(X)) \subset \Aut(\tilde{X})$ is a closed subgroup and $\tilde{\eta}$ induces the isomorphism of ind-groups $\Aut(X) \xrightarrow{\sim} \tilde{\eta}(\Aut(X))$. 
Hence, we have the following statement.

\begin{lemma}\label{closedembedding}
Let $X$ be an irreducible affine variety, and let $\eta \colon \tilde{X} \to X$ be its normalization. Then every automorphism of $X$ lifts uniquely to an automorphism of $\tilde{X}$  and the induced map $\tilde{\eta} \colon \Aut(X)  \hookrightarrow \Aut(\tilde{X})$  is a closed immersion of ind-groups.
\end{lemma}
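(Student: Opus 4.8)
The plan is to work with the algebraic model of the normalization: $\mathcal{O}(\tilde X)$ is the integral closure of $\mathcal{O}(X)$ in the common fraction field $K:=\mathbb{C}(X)=\mathbb{C}(\tilde X)$, so that $\mathcal{O}(X)\subseteq \mathcal{O}(\tilde X)\subseteq K$ with $\mathcal{O}(\tilde X)$ finite and integral over $\mathcal{O}(X)$. First I would lift automorphisms. Given $\phi\in\Aut(X)$, the comorphism $\phi^*$ is a $\mathbb{C}$-algebra automorphism of $\mathcal{O}(X)$ and extends uniquely to a field automorphism $\hat\phi$ of $K$. Applying $\hat\phi$ to an integral equation of an element $a\in\mathcal{O}(\tilde X)$ over $\mathcal{O}(X)$ shows $\hat\phi(a)$ is again integral; since $\hat\phi$ is invertible it restricts to an automorphism of $\mathcal{O}(\tilde X)$ and thus defines $\tilde\phi\in\Aut(\tilde X)$ with $\eta\circ\tilde\phi=\phi\circ\eta$. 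Any lift $\psi$ satisfies $\psi^*|_{\mathcal{O}(X)}=\phi^*$, hence $\psi^*=\hat\phi$ on $K$ and $\psi=\tilde\phi$, so the lift is unique. Uniqueness makes $\Phi\colon\phi\mapsto\tilde\phi$ a group homomorphism $\Aut(X)\to\Aut(\tilde X)$, and it is injective because $\tilde\phi=\id$ forces $\phi^*=\tilde\phi^*|_{\mathcal{O}(X)}=\id$.

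Next I would show that $\Phi$ carries $U(X)$ into $U(\tilde X)$, which is the heart of the statement. Let $U\subset\Aut(X)$ be a closed one-dimensional unipotent subgroup; it corresponds to a nonzero locally nilpotent derivation $D$ of $\mathcal{O}(X)$. Extend $D$ to a derivation of $K$ by the quotient rule. Since $\Char\mathbb{C}=0$ and $\mathcal{O}(\tilde X)$ is the integral closure of $\mathcal{O}(X)$ in $K$, this extension maps $\mathcal{O}(\tilde X)$ into itself (Seidenberg's lemma), giving a derivation $D'$ of $\mathcal{O}(\tilde X)$ with $D'|_{\mathcal{O}(X)}=D$. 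By \cite[Theorem 2.2]{Vas69}, $D'$ is again locally nilpotent, and $D'\neq 0$ because $D\neq 0$; hence $D'$ integrates to a nontrivial $\mathbb{C}^+$-action $\tilde U\subset\Aut(\tilde X)$, i.e.\ a closed one-dimensional unipotent subgroup. A one-parameter computation shows $\exp(tD')$ restricts on $\mathcal{O}(X)$ to $\exp(tD)$, so by uniqueness of the lift $\Phi(\exp(tD))=\exp(tD')$, whence $\Phi(U)=\tilde U$. As $U(X)$ is generated by such subgroups, $\Phi$ maps $U(X)$ into $U(\tilde X)$; denote the restriction $\tilde\eta\colon U(X)\to U(\tilde X)$, which is injective by the previous paragraph.

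Finally I would verify the two conditions in the definition of an algebraic homomorphism. For each closed one-dimensional unipotent $U\subset U(X)$ the image $\tilde\eta(U)=\tilde U$ has just been seen to be a closed one-dimensional unipotent subgroup; moreover, identifying $U$ and $\tilde U$ with $\mathbb{C}^{+}$ through $t\mapsto\exp(tD)$ and $t\mapsto\exp(tD')$ respectively, the map $\tilde\eta|_U$ is $t\mapsto t$, an isomorphism of algebraic groups. This is exactly the required property, so $\tilde\eta$ is an algebraic homomorphism. I expect the only genuine obstacle to be the middle step, namely guaranteeing that a unipotent action lifts to a \emph{unipotent} action: this is where one must invoke both that a derivation of $\mathcal{O}(X)$ preserves the integral closure in characteristic zero and the Vasconcelos result that local nilpotency is inherited by the extended derivation; the remaining assertions are formal consequences of the universal property of the normalization.
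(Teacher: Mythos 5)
Your proof is correct, and its first half coincides with the paper's argument: you lift $\phi$ by extending the comorphism to the fraction field and observing that the integral closure $\mathcal{O}(\tilde X)$ is stable under the extension, exactly as in the paper. For the second half you take a genuinely different route. You work at the level of locally nilpotent derivations: extend the LND $D$ of $\mathcal{O}(X)$ to $K=\mathbb{C}(X)$ by the quotient rule, invoke Seidenberg's theorem (finiteness of the integral closure of a finitely generated domain over $\mathbb{C}$ makes it applicable) to see that the extension preserves $\mathcal{O}(\tilde X)$, and then invoke \cite[Theorem 2.2]{Vas69} to see that the extended derivation is again locally nilpotent, so it integrates to a closed one-dimensional unipotent subgroup of $\Aut(\tilde X)$. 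The paper instead lifts the whole action map at once: since $\mathbb{C}^{+}\times\tilde X$ is normal, the composite $\mathbb{C}^{+}\times\tilde X\to\mathbb{C}^{+}\times X\to X$ factors through $\eta$ by the universal property of normalization, which immediately produces a regular lifted action (and in fact works verbatim for any algebraic group, not just $\mathbb{C}^{+}$). The paper's route is the more economical one, needing neither Seidenberg nor Vasconcelos; your route has the merit of making the derivation-level picture explicit and of reusing exactly the machinery the paper already deploys in Lemma~\ref{closedsubgroup}(c) for quotients by finite groups. Both arguments deliver what the definition of an algebraic homomorphism requires, namely that each closed one-dimensional unipotent $U\subset U(X)$ maps onto a closed one-dimensional unipotent subgroup of $\Aut(\tilde X)$ isomorphically as an algebraic group, and your explicit identification $\exp(tD)\mapsto\exp(tD')$ makes that isomorphism transparent.
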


\begin{proposition}\label{SLn-action}
Let $n \ge 3$ and let $X$ be an $n$-dimensional irreducible affine variety endowed with a non-trivial $\SL_n$-action. Then 
$$\mathcal{O}(X) = \sum_{i = 1,\dots,l} \bigoplus_{k \ge 0}    \mathbb{C}[x_1,\dots,x_n]_{k d_i}$$ for some $d_1,\dots,d_l \in \mathbb{N}$, where $(d_1,\dots,d_l) = d$ and the normalization of $X$ is isomorphic to $A_{d,n}$. The same holds when $n=2$ and the normalization of $X$ is $A_{d,2}$ for some $d \in \mathbb{N}$. 
\end{proposition}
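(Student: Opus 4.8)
The plan is to reduce to the normal case via normalization and then to analyze the resulting $\SL_n$-submodule of $\mathcal{O}(A_{d,n})$ combinatorially. First I would pass to the normalization $\eta\colon \tilde{X}\to X$. By Lemma \ref{closedembedding} the $\SL_n$-action on $X$ lifts uniquely to an $\SL_n$-action on $\tilde{X}$ making $\eta$ equivariant, and this lifted action is again non-trivial since $\eta$ is a surjective equivariant morphism. As $\tilde{X}$ is normal, irreducible, affine and $n$-dimensional, Proposition \ref{SLn} gives an $\SL_n$-equivariant isomorphism $\tilde{X}\cong A_{d,n}$ for some $d\ge 1$ when $n\ge 3$; for $n=2$ this is exactly the hypothesis. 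Pulling back along $\eta$, I obtain an injective $\SL_n$-equivariant homomorphism of graded rings $\mathcal{O}(X)\hookrightarrow \mathcal{O}(\tilde{X})=\mathcal{O}(A_{d,n})=\bigoplus_{k\ge 0}\mathbb{C}[x_1,\dots,x_n]_{dk}$, so that $\mathcal{O}(X)$ may be regarded as an $\SL_n$-stable subalgebra of the right-hand side.

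The key structural input is that $\mathcal{O}(A_{d,n})$ is multiplicity-free as an $\SL_n$-module: each graded piece $\mathbb{C}[x_1,\dots,x_n]_{dk}$ is an irreducible $\SL_n$-module (a symmetric power $\operatorname{Sym}^{dk}$ of the standard module), and these are pairwise non-isomorphic for distinct $k$. Since $\mathcal{O}(X)$ is a locally finite (rational) $\SL_n$-submodule and $\SL_n$ is reductive, $\mathcal{O}(X)$ equals the sum of exactly those isotypic components it meets; by multiplicity-freeness this forces $\mathcal{O}(X)=\bigoplus_{k\in S}\mathbb{C}[x_1,\dots,x_n]_{dk}$ for some subset $S\subseteq\mathbb{N}_{\ge 0}$ (with $S\neq\{0\}$ since $\dim X=n\ge 2$). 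Because $\mathcal{O}(X)$ contains the constants we have $0\in S$, and because it is a subring I would use that the product of homogeneous spaces $\mathbb{C}[x_1,\dots,x_n]_{dk}\cdot\mathbb{C}[x_1,\dots,x_n]_{dk'}$ spans all of $\mathbb{C}[x_1,\dots,x_n]_{d(k+k')}$ (every monomial of degree $d(k+k')$ factors accordingly); hence $k+k'\in S$ whenever $k,k'\in S$. Thus $S$ is a submonoid of $(\mathbb{N}_{\ge 0},+)$.

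It remains to rewrite $\mathcal{O}(X)=\mathbb{C}\oplus\bigoplus_{0\neq k\in S}\mathbb{C}[x_1,\dots,x_n]_{dk}$ in the asserted shape, and here I would invoke the elementary theory of numerical monoids. Setting $g=\gcd(S\setminus\{0\})$ one has $S\subseteq g\mathbb{N}$, and $S/g$ is a submonoid of $\mathbb{N}$ of $\gcd$ one, hence contains every integer beyond some conductor $N$. Consequently $S$ contains the full arithmetic tail $\{gk\mid k\ge N\}$ together with finitely many ``sporadic'' positive elements $s_1,\dots,s_r$ below $gN$. Taking the progression $d_0:=dg$, $k_0:=N$ for the tail and, for each sporadic $s_j$, the progression $d_j:=ds_j$, $k_j:=1$ (legitimate because $ks_j\in S$ for all $k\ge 1$, $S$ being closed under addition), the union of the corresponding degree sets is exactly $dS\setminus\{0\}$. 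This yields the stated presentation $\mathcal{O}(X)=\mathbb{C}\oplus\sum_{i=1}^{l}\sum_{k=k_i}^{\infty}\mathbb{C}[x_1,\dots,x_n]_{kd_i}$ with $l=r+1$ finite. The point requiring genuine care — and the step I expect to be the main obstacle — is precisely this last translation: one must ensure that finitely many progressions suffice, which hinges on the finiteness of the complement of a $\gcd$-one numerical monoid, and one must verify the equivariance of the normalization inclusion so that the multiplicity-free decomposition can be applied at all.
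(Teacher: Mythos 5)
Your proposal is correct and follows essentially the same route as the paper: pass to the normalization, identify it with $A_{d,n}$ via Proposition \ref{SLn}, view $\mathcal{O}(X)$ as an $\SL_n$-stable subalgebra of the multiplicity-free module $\bigoplus_k \mathbb{C}[x_1,\dots,x_n]_{dk}$, and deduce that the set of occurring degrees is a submonoid of $\mathbb{N}$. The only (harmless) divergence is in the final bookkeeping: the paper appeals to finite generation of $\mathcal{O}(X)$ to get a finitely generated submonoid, while you extract the arithmetic tail plus finitely many sporadic progressions directly from numerical-semigroup considerations; both yield the stated presentation, and your version is if anything more explicit.
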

\begin{proof}
 First, let $n \ge 3$.  If    $X$ is normal,  then  from \cite[Theorem 1.6 and Proposition 4.4(2), see also Example 4.5]{KRZ17} it follows that  $X \simeq A_{d,n}$ for some $d \in \mathbb{N}$.
It is well known that  $$\mathcal{O}(A_{d,n}) = \bigoplus_{k=0}^{\infty} \mathbb{C}[x_1,\dots,x_n]_{kd}$$ is a direct sum of irreducible pairwise non-isomorphic  $\SL_n$-modules $\mathbb{C}[x_1,\dots,x_n]_{kd}$.

  Now, consider any $n$-dimensional irreducible affine variety $X$ endowed with a non-trivial $\SL_n$-action and  a normalization morphism $\eta \colon A_{d,n} \to X$.
Since any  $\SL_n$-action on $\mathcal{O}(X)$ lifts to an $\SL_n$-action on $\mathcal{O}(A_{d,n})$, it follows 
that $\mathcal{O}(X)$ is a $\SL_n$-submodule of $\mathcal{O}(A_{d,n})$ and therefore
$$\mathcal{O}(X) = \bigoplus_{k \in \Omega} \mathbb{C}[x_1,\dots,x_n]_{kd},$$ where $\Omega$ is a submonoid of $\mathbb{N}$ under addition.
Since $\mathcal{O}(X)$ is finitely generated, $\Omega \subset \mathbb{N}$ is a finitely generated submonoid i.e., there exist 
$d_1,\dots,d_l \in \mathbb{N}$ such that
$\Omega = d_1 \mathbb{N} + \dots + d_l \mathbb{N}$. 
The claim follows.
\end{proof}


\section{$2$-dimensional case}\label{Section6}
\subsection{Two dimensional normal affine $\SL_2$-surfaces}

The next result can be found in \cite{Pop73}, \S 3 (see also \cite{Giz71} and \cite{Kr84}, \S 4).  
\begin{lemma}\label{SL2}
Let $X$ be an affine normal irreducible variety of dimension two endowed with a non-trivial $\SL_2$-action.
Then  $X$ is $\SL_2$-equivariantly  isomorphic to one of the following varieties:

 (a) $A_{d,2}$ for some $d \in \mathbb{N}$, where $\SL_2$-action on $A_{d,2}$ is induced by the standard $\SL_2$-action on $\mathbb{A}^2$,

 (b) $\SL_2/T$,  where $T$  is the standard subtorus of $\SL_2$ and $\SL_2$ acts on  $\SL_2/T$ by left multiplication, 

 (c) $\SL_2/N$, where $N$ is the normalizer of $T$ and $\SL_2$ acts on $\SL_2/N$ by left multiplication. 
\end{lemma}

The $\SL_2$-action on $\SL_2/T$ and on $\SL_2/N$ from Lemma above is transitive. 
The $\SL_2$-variety $A_{d,2}$ is the union of a fixed point and the orbit $(\mathbb{A}^2 \setminus \{ 0 \})/\mu_d \simeq  \SL_2/U_d$,
where  $\mu_d$ acts by scalar multiplication on $\mathbb{A}^2 \setminus \{ 0 \}$  and $$U_d
=\left \{ 
\begin{bmatrix}
    \xi  & t  \\
    0 & \xi^{-1}  \\
    \end{bmatrix} \mid t \in \mathbb{C}, \xi \in \mathbb{C}^*, \xi^d = 1 \right \}.$$ Moreover, any closed subgroup of $\SL_2$ of codimension 
   less than or equal to $2$ is conjugate to either $T$, or $N$, or $U_d$ for some $d \ge 1$, or $B =  \left \{ 
\begin{bmatrix}
    a  & t  \\
    0 & a^{-1}  \\
    \end{bmatrix} \mid t \in \mathbb{C}, a \in \mathbb{C}^*  \right \}$ (see for example \cite[page 803]{Pop73}).

The next result can be found in \cite[III.2.5, Folgerung 3]{Kr84}. 

\begin{proposition}\label{Kr84}
If a reductive group $G$ acts on an affine variety $X$ and if the stabilizer of a point $x\in X$ contains a maximal torus, then the orbit $Gx$ is closed.
\end{proposition}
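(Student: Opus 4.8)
The plan is to reduce to a linear action and then apply the Kempf--Ness criterion, which characterizes closed orbits through the vanishing of a moment map. First I would choose a $G$-equivariant closed embedding $X \hookrightarrow V$ into a finite-dimensional rational $G$-module $V$; this is possible since $G$ is reductive and $X$ is affine. As $X$ is closed in $V$, the orbit $Gx$ is closed in $X$ if and only if it is closed in $V$, and the stabilizer $G_x$ is unchanged, so I may assume $X = V$ and $x \in V$ with $T \subseteq G_x$.

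Next I would fix a maximal compact subgroup $K \subseteq G$ together with a $K$-invariant Hermitian inner product $\langle\,,\rangle$ on $V$, chosen so that $T_K := T \cap K$ is a maximal torus of $K$ and $T = T_K \cdot \exp(i\,\mathfrak{t}_K)$, where $\mathfrak{t}_K = \Lie(T_K)$. The relevant tools are the moment map $\mu_K : V \to \mathfrak{k}^*$, determined by $\langle \mu_K(v),\xi\rangle = \tfrac{1}{2i}\langle \xi\cdot v, v\rangle$ for $\xi \in \mathfrak{k}$, and the Kempf--Ness theorem: $Gv$ is closed if and only if $Gv$ contains a vector of minimal norm, and $v$ is such a minimal vector precisely when $\mu_K(v) = 0$. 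Thus it suffices to prove that $\mu_K(x) = 0$.

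This is where the hypothesis on the maximal torus enters. Since $T_K \subseteq G_x$ fixes $x$, differentiating the $T_K$-action gives $\xi\cdot x = 0$ for every $\xi \in \mathfrak{t}_K$, so $\langle \mu_K(x), \xi\rangle = 0$ on $\mathfrak{t}_K$. On the other hand, equivariance of the moment map, $\mu_K(k\cdot v) = \operatorname{Ad}_k^{*}\mu_K(v)$, applied to $k \in T_K$ shows $\mu_K(x) \in (\mathfrak{k}^*)^{T_K}$. Using the $K$-invariant form to identify $\mathfrak{k}^* \cong \mathfrak{k}$, one has $(\mathfrak{k}^*)^{T_K} \cong \mathfrak{k}^{T_K} = \mathfrak{t}_K$, because the centralizer of a maximal torus in a compact group is the torus itself. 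Hence $\mu_K(x)$ is supported on $\mathfrak{t}_K$ and at the same time annihilates $\mathfrak{t}_K$, which forces $\mu_K(x) = 0$. By Kempf--Ness, $x$ is a minimal vector and $Gx$ is closed.

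I expect the main technical point to be the compatible choice of the compact form --- arranging that the given maximal torus $T$ of $G$ meets $K$ in a maximal torus $T_K$ --- together with the identification $(\mathfrak{k}^*)^{T_K} = \mathfrak{t}_K^*$, which is exactly what converts ``fixed by a maximal torus'' into ``the moment map has no components off $\mathfrak{t}_K$.'' A purely algebraic route is also conceivable: by the Hilbert--Mumford--Kempf criterion, non-closedness of $Gx$ would produce a one-parameter subgroup $\lambda$ with $\lim_{t\to 0}\lambda(t)x$ lying in a strictly smaller closed orbit. The obstacle there is that such a destabilizing $\lambda$ need neither commute with $T$ nor be conjugable into $T$ while keeping $x$ fixed, and it is precisely this difficulty that the moment-map argument sidesteps.
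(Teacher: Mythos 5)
Your proof is correct, and it takes a genuinely different route from the source the paper relies on: the paper does not prove this proposition at all but quotes it from \cite[III.2.5, Folgerung 3]{Kr84}, where the argument is purely algebraic. That classical proof runs as follows: after the same reduction to a $G$-module one writes $B=T\ltimes U$ for a Borel subgroup containing the maximal torus $T\subseteq G_x$; since $T$ fixes $x$ and normalizes $U$, one has $Bx=Ux$, which is closed by the Kostant--Rosenlicht theorem (orbits of unipotent groups on affine varieties are closed); and since $G/B$ is complete, $G\cdot\overline{Bx}=G\cdot Bx=Gx$ is closed, while it must contain $\overline{Gx}$. Your Kempf--Ness argument is a correct analytic substitute: the reduction to a linear action, the vanishing of $\mu_K(x)$ on $\mathfrak{t}_K$ from $T_K\subseteq G_x$, the equivariance forcing $\mu_K(x)\in(\mathfrak{k}^*)^{T_K}\cong\mathfrak{t}_K$, and the definiteness of the invariant form on $\mathfrak{t}_K$ together give $\mu_K(x)=0$, whence the orbit is closed; the only points you leave implicit (the compatible choice of $K$ with $T\cap K$ maximal in $K$, and connectedness of $K$ in the identification $\mathfrak{k}^{T_K}=\mathfrak{t}_K$) are standard. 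The trade-off is that your proof is tied to the base field $\mathbb{C}$ and to the Kempf--Ness machinery, which is harmless here, whereas the Borel-subgroup/Kostant--Rosenlicht proof works over any algebraically closed field and is more elementary; it is also precisely the device that resolves the difficulty you flag at the end about destabilizing one-parameter subgroups not being conjugable into $T$.
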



\begin{proposition}\label{orbits}
 Let $X$ be a two-dimensional $\SL_2$-variety and let $O = \SL_2 x$ be the orbit of $x \in X$. 
 Then we are in one of the following cases:

\begin{enumerate}[label=(\alph*)]
\item $x$ is a fixed point;
\item  the orbit $O$ is closed and $\SL_2$-isomorphic to $\SL_2/T$ or $\SL_2/N$;
\item $\overline{O} = O \cup \{ x_0 \}$, where $\overline{O}$ is the closure of the orbit $O$ and $x_0$ is a fixed point. Moreover, either $\overline{O} \simeq \mathbb{A}^2$ or $x_0$ is an isolated singular point.
\end{enumerate}
\end{proposition}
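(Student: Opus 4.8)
The plan is to read off the three cases directly from the stabilizer $H := \Stab(x)$, using $O \cong \SL_2/H$ and the dimension formula $\dim O = 3 - \dim H$. The hypothesis $\dim O \le 2$ forces $\dim H \ge 1$. If $\dim H = 3$ then $H = \SL_2$ and $x$ is a fixed point, which is case (a). If $\dim H = 2$, then $\dim O = 1$ and, by the classification of closed subgroups of $\SL_2$ of codimension $\le 2$ recalled after Lemma \ref{SL2}, $H$ is conjugate to the Borel $B$; hence $O \cong \SL_2/B \cong \mathbb{P}^1$. Since an orbit is locally closed in $X$ and $\mathbb{P}^1$ is complete, $O$ would be a positive-dimensional complete, hence closed, subvariety of the affine variety $X$; but an affine variety contains no positive-dimensional complete subvariety. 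Thus $\dim O = 1$ cannot occur.

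It remains to treat $\dim O = 2$, i.e.\ $\dim H = 1$. By the same classification, $H$ is conjugate to $T$, to $N(T)$, or to some $U_d$. If $H$ is conjugate to $T$ or to $N(T)$, then $H$ contains a maximal torus of $\SL_2$, so Proposition \ref{Kr84} shows that $O$ is closed; as $O \cong \SL_2/T$ or $O \cong \SL_2/N(T)$, we are in case (b).

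Finally suppose $H$ is conjugate to $U_d$, so that $O \cong \SL_2/U_d \cong (\mathbb{C}^2 \setminus \{0\})/\mu_d$. This orbit is not affine: its ring of regular functions is $\mathbb{C}[x,y]^{\mu_d}$ by normality (Hartogs across the puncture), so its spectrum would be all of $A_{d,2}$; hence $O$ is not closed and $\overline{O} \ne O$. This is where the real work lies, and I expect it to be the main obstacle. I would pass to the normalization $\nu \colon Y \to \overline{O}$: by (the proof of) Lemma \ref{closedembedding}, $Y$ is a normal affine $\SL_2$-surface, $\nu$ is finite, surjective and $\SL_2$-equivariant, and it restricts to an isomorphism over the smooth orbit $O$, so the open orbit of $Y$ is again $\SL_2/U_d$. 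By Lemma \ref{SL2} a normal affine $\SL_2$-surface is one of $A_{d',2}$, $\SL_2/T$, $\SL_2/N(T)$; only $A_{d',2}$ has open orbit of the form $\SL_2/U_{d'}$, and matching the open orbits forces $Y \cong A_{d,2}$, the union of the open orbit $\SL_2/U_d$ with a unique fixed point $y_0$. Pushing down along the finite surjection $\nu$ gives $\overline{O} = O \cup \{x_0\}$ with $x_0 := \nu(y_0)$ a fixed point, which is the first assertion of (c).

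For the dichotomy in (c): since $O$ is homogeneous, hence smooth, the singular locus of $\overline{O}$ is contained in $\{x_0\}$. If $\overline{O}$ is smooth it is normal, so $\nu$ is an isomorphism and $\overline{O} \cong A_{d,2}$; smoothness then forces $d = 1$ and $\overline{O} \cong A_{1,2} = \mathbb{A}^2$. Otherwise $\Sing(\overline{O}) = \{x_0\}$, so $x_0$ is an isolated singular point. The delicate points, which I would check carefully, are the non-affineness of $\SL_2/U_d$ (guaranteeing genuine non-closedness) and the transfer of Popov's classification from the normalization $Y$ back to $\overline{O}$ so as to pin down the boundary as a single fixed point.
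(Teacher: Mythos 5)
Your proof is correct, and its skeleton --- case analysis on the stabilizer via the classification of closed subgroups of $\SL_2$ of codimension $\le 2$, together with Proposition \ref{Kr84} when the stabilizer contains a maximal torus --- coincides with the paper's. Where you genuinely diverge is the step you yourself flagged: establishing $\overline{O} = O \cup \{x_0\}$ when the stabilizer is $U_d$. The paper argues directly that $\overline{O}$ contains no one-dimensional orbits (such an orbit would be a copy of $\SL_2/B \cong \mathbb{P}^1$, impossible inside an affine variety), so the boundary consists of fixed points, and then simply asserts it is a single point. You instead pass to the normalization $Y$ of $\overline{O}$, identify $Y \cong A_{d,2}$ via Popov's classification (Lemma \ref{SL2}), and push down along the finite equivariant surjection. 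Your route is longer but buys something concrete: it actually proves that the boundary is a single fixed point, since $A_{d,2} = \SL_2/U_d \sqcup \{y_0\}$ --- a point the paper's ``hence'' leaves implicit (one would otherwise note that $\mathcal{O}(\overline{O})^{\SL_2} \subset \mathcal{O}(\SL_2/U_d)^{\SL_2} = \mathbb{C}$ forces a unique closed orbit, necessarily a fixed point). Your explicit elimination of the Borel stabilizer in the $\dim O = 1$ case is likewise more careful than the paper's, which folds it silently into ``we are in case (a) or (b)''. The smooth/singular dichotomy in (c) is handled identically in both arguments.
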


\begin{proof} If the stabilizer of $x$ contains a maximal torus then we are in case (a) or (b) by Proposition \ref{Kr84} and Lemma \ref{SL2}. 
Otherwise, from the classification of closed subgroups of $\SL_2$ it follows that  the stabilizer of $x$ coincides with $U_d$ for some $d \ge 1$ and
$\overline{O}$ does not contain orbits of dimension one. Hence,
   $\overline{O} = O \cup \{ x_0 \}$. It is clear that if $\overline{O}$ is singular, then $x_0$ is an isolated singular point. If 
   $\overline{O}$ is smooth, then from Lemma \ref{SL2}  it follows that  $\overline{O}$ is isomorphic to $\mathbb{A}^2$.
\end{proof}

\begin{remark}\label{SL2TP2}
Note that $\SL_2/T \simeq \mathbb{P}^1 \times \mathbb{P}^1 \setminus \Delta$, where $\Delta$ is the diagonal, and $\SL_2/N \simeq \mathbb{P}^2 \setminus C$, where $C$ is a smooth conic  (see \cite[Lemma 2]{Pop73}).
\end{remark}


\subsection{The structure of $\Aut(\SL_2/T)$}\label{structureAutSLT}
The variety  $\SL_2/T$  is isomorphic to the following  so-called \name{Danielewski} surface, i.e.,  the smooth $2$-dimensional affine quadric $ V(xz - y^2+y) \subset \mathbb{A}^3$ 
(see \cite{DP09})
and the  quotient map $\pi \colon \SL_2 \to \SL_2/T$ is given by $\begin{bmatrix}
 a & b  \\
 c & d  
\end{bmatrix} \mapsto (ab,ad,cd) $.
It is not difficult to see that  $X = V(xz + y^2 - 1)    \simeq  V(xz - y^2 + y)  \subset \mathbb{A}^3$. From now on and until end of Section \ref{Section6} we identify $\SL_2/T$ with $X= V(xz + y^2 - 1)$.


Consider the orthogonal group $\OO_3=\OO(3,\mathbb{C})$ 
associated with the quadratic form $y^2+xz$ 
generated by
$\tau \colon \mathbb{A}^3 \to \mathbb{A}^3$ given by the following map: $(x,y,z) \mapsto (-x,-y,-z)$
and by
the
 group $\SO_3=\SO(3,\mathbb{C})$ that is composed of the matrices
 \[  \frac{1}{ad-bc}
 \begin{pmatrix} a^2 & 2ab & -b^2  \\ 
 ac & ad + bc & -bd  \\ 
 -c^2 & -2cd & d^2  \\ 
 \end{pmatrix}   \text{ with } 
 \begin{pmatrix} 
 a & b   \\ 
 c & d   \\ 
 \end{pmatrix} \in \PSL_2. \]

 Following  \cite[Theorem 6]{Lam05} (see also \cite{MM90}),
 $\Aut(X)$ is the amalgamated product of  the  orthogonal  group  $\OO_3 = \SO_3 \times \langle \tau \rangle$ and  $\J \rtimes \langle \tau  \rangle$ along their intersection $C$, where 
$\J$ is the  subgroup of  $\Aut(X)$ of the automorphisms of the form
$$
(x,y,z) \mapsto (\alpha x+2 \alpha yP(z) - \alpha z P^2(z),  y-zP(z), \frac{1}{\alpha}z) ;  \; \alpha \in \mathbb{C}^*, P \in \mathbb{C}[z]. 
$$

Note that $\SO_3$ is generated by 
$\mathbb{C}^+$-actions. Define
 $\tilde{\J}$ to be the subgroup of $\J$ generated by $\mathbb{C}^+$-actions.   The subgroup of 
  $\Aut(X)$ generated by $\SO_3$ and 
   $\tilde{\J}$ coincides with the subgroup of 
  $\Aut(X)$ generated by $\SO_3$ and 
   $\J$ and is a subgroup of $\U(X)$. Moreover, because 
   $\tau$ normalizes $\langle \J, \SO_3  \rangle$
we have that 
$\Aut(X) = \langle \J, \SO_3  \rangle \rtimes \langle \tau \rangle$.
This implies that $\Aut(X)$ is not connected and $\tau \notin \Aut^
\circ(X)$.  Since the closure of $\U(X) \subset \Aut(X)$ is connected as $\U(X)$ is generated by connected subgroups
and since $\langle \J, \SO_3  \rangle \subset \U(X)$ we have that $\U(X) \subset \Aut(X)$ coincides with $\Aut^\circ(X)$ and hence  is  closed. Moreover, $\Aut(X) = \U(X) \rtimes \langle \tau \rangle$.

   The following proposition is going to be of use later  (see \cite[Corollary 8.11]{Neu48}).
\begin{proposition}\label{Neu}
  In the amalgamated product $G = A \ast_C B$ with the unified subgroup $C = A\cap B$, consider two subgroups $\tilde{A} \subset A$ and $\tilde{B} \subset B$, and let $\tilde{G} = \langle  \tilde{A}, \tilde{B} \rangle$. Assume that $\tilde{A} \cap C = \tilde{C} = \tilde{B} \cap C$. Then $\tilde{G} = \tilde{A} \ast_{\tilde{C}} \tilde{B}$. 
\end{proposition}

\begin{lemma}\label{amalgamT}
The group $\U(X)$ is the amalgamated product of $\SO_3$ and $\J$ 
 along their intersection. 
\end{lemma}
\begin{proof}
 We know that $\Aut(X)$ is the amalgamated product of 
  $\OO_3 = \SO_3 \times \langle \tau \rangle$ and  $\J \rtimes \langle \tau  \rangle$ along their intersection $C$.
  Moreover, since $\SO_3 \cap C = 
  \J \cap \SO_3 = \J \cap C$ we have by Proposition \ref{Neu} that $\langle \J, \SO_3  \rangle \subset \Aut(X)$ is the amalgamated product of $\J$ and $\SO_3$ along their intersection.  
  As $\U(X) = \langle \J, \SO_3  \rangle$
  the claim follows.
\end{proof}
 
\begin{lemma}\label{centralizer}
 The subgroup $\Aut_{\SO_3}(X) \subset \Aut(X)$ of those automorphisms that commute with $\SO_3$ is $\langle \tau  \rangle$.
 Moreover,  $X/\langle \tau \rangle \simeq \SL_2 /N$. 
 \end{lemma}
 \begin{proof}
Let $\varphi \in \Aut_{\SO_3}(X)$. 
Since $\Aut(X)$ is the amalgamated product of $\OO_3$ and $\J$ along their intersection,
we can
write $\varphi$ as the product $a_1\circ b_1 \circ \dots \circ a_k \circ b_k$, where $a_i \in \OO_3$ and $b_i \in \J$. Since $a = \varphi \circ a \circ \varphi^{-1}$ for any $a \in \SO_3$,
we have that $b_k \in \OO_3 \cap \J$ and $\varphi$ can be written as 
$\tilde{a}_1\circ \tilde{b}_1 \circ \dots \circ \tilde{a}_k$ for some $\tilde{a}_i \in \OO_3$ and $\tilde{b}_i \in \J$. Assume first that $k > 1$. Hence,  the expression
\[  \tilde{a}_1\circ \tilde{b}_1 \circ \dots \circ \tilde{a}_k \circ a \circ (\tilde{a}_1\circ \tilde{b}_1 \circ \dots \circ \tilde{a}_k)^{-1} = a \]
implies that  $\tilde{a}_k \circ a \circ \tilde{a}_k^{-1} \in \OO_3 \cap \J$. Since this should hold for any $a \in \SO_3$ we get a contradiction. Therefore, $k = 1$ and hence, $\varphi \in \OO_3$. This implies that $\Aut_{\SO_3}(X) = \langle \tau \rangle$ as the centralizer of $\SO_3$ in $\OO_3$ is $\langle \tau \rangle$.

To finish the proof we need to argue that $X/\langle \tau \rangle \simeq \SL_2 /N$. We first note that $X/\langle \tau \rangle$ is normal and since $\SL_2$-action  on $X$ is transitive, it follows that the induced action of $\SL_2$  on $X/\langle \tau \rangle$ is transitive too. Hence,  from Lemma \ref{SL2} it follows that $X/\langle \tau \rangle \simeq \SL_2/N$ as $X \not\simeq X/\langle \tau \rangle$.
\end{proof}

Note that the  subgroup $\U(X) = \Aut^\circ(X) \subset \Aut(X)$ is closed (see \cite[Lemma 6.3]{Kr15}), where $\Aut^\circ(X)$ is the neutral component of $\Aut(X)$.  Hence, $\U(X)$ is an ind-group.

\begin{proposition}\label{proposition2}
 We have the following properties.

$(a)$ All closed subgroups $S \subset \Aut(X)$ isomorphic to $\PSL_2$ are conjugate. 

$(b)$ The root subgroups with respect to a maximal torus $\tilde{T}$ of any $S \simeq \PSL_2$
are multiplicity-free with weights $1,2,3,\dots$ up to an automorphism of $\tilde{T}$.
\end{proposition}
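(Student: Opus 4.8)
The plan is to derive (a) from the classification of two-dimensional $\SL_2$-varieties (Lemma \ref{SL2}) and (b) from the explicit list of root subgroups computed just above together with the amalgamated product structure of $U(X)$. For (a), write $S_0\cong\PSL_2$ for the image of the standard $\SL_2$-action on $X=\SL_2/T$, realized as $\SO(3,\mathbb{C})$ acting on the quadric, and let $S\subset\Aut(X)$ be an arbitrary closed subgroup isomorphic to $\PSL_2$. I want $\psi\in\Aut(X)$ with $\psi S_0\psi^{-1}=S$. Being a closed algebraic subgroup, $S$ defines an algebraic $\PSL_2$-action on $X$; lifting along $\SL_2\to\PSL_2$ gives a non-trivial $\SL_2$-action (note that $-I\in T$ acts trivially on $\SL_2/T$, so both actions genuinely factor through $\PSL_2$). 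As $X$ is smooth, irreducible and two-dimensional, Lemma \ref{SL2} shows that, for this $S$-action, $X$ is $\SL_2$-isomorphic to $A_{d,2}$, to $\SL_2/T$, or to $\SL_2/N(T)$. Now $X\cong\SL_2/T$ is smooth with Euler characteristic $2$, whereas $A_{d,2}$ is singular for $d>1$, while $A_{1,2}=\mathbb{A}^2$ and $\SL_2/N(T)$ have Euler characteristic $1$ (cf.\ the remark after Proposition \ref{orbits}); hence $X$ is $\SL_2$-isomorphic to the standard model $\SL_2/T$ also for the $S$-action. Choosing $\SL_2$-equivariant isomorphisms $\alpha_0,\alpha\colon\SL_2/T\xrightarrow{\sim}X$ intertwining the standard $\PSL_2$ with $S_0$ and with $S$ respectively, the automorphism $\psi:=\alpha\circ\alpha_0^{-1}$ satisfies $\psi S_0\psi^{-1}=S$. (Alternatively one argues inside the amalgam $U(X)=\SO(3,\mathbb{C})*_{C_T}J_T$: as $S$ is generated by one-dimensional unipotent subgroups it lies in $U(X)$, and being a bounded algebraic subgroup it is conjugate into a factor; the factor $J_T$ is solvable, so $S$ is conjugate into $\SO(3,\mathbb{C})$, and equality follows by comparing dimensions.)

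For (b), by (a) it is enough to analyze the root subgroups with respect to a single maximal torus $T''$ of $S_0$, since a conjugation carries root subgroups to root subgroups and preserves weights. Take $T''=\{(tx,y,t^{-1}z)\mid t\in\mathbb{C}^*\}\subset\SO(3,\mathbb{C})$. The computation preceding the proposition produces, for each $i\in\mathbb{N}\cup\{0\}$, the root subgroup $U_i$ of weight $i+1$, so the weights $1,2,3,\dots$ all occur. To see these exhaust the positive weights and that each occurs with multiplicity one, I use the amalgam: every one-dimensional unipotent subgroup of $U(X)$ is conjugate into a factor, and one normalized by $T''$ may be taken inside a factor. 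Inside $J_T$ the unipotent part is the abelian group $\{(x+2yP(z)-zP(z)^2,\,y-zP(z),\,z)\mid P\in\mathbb{C}[z]\}\cong(\mathbb{C}[z],+)$, with $T''$-weight decomposition $\bigoplus_{i\ge0}\mathbb{C}\,z^i$ where $z^i$ has weight $i+1$; thus the root subgroups contained in $J_T$ are precisely the $U_i$, one for each weight $1,2,3,\dots$. Inside $\SO(3,\mathbb{C})$ the only root subgroups relative to $T''$ are $U_0$ and its opposite, the latter of weight $-1$. Hence for every positive weight the space of root directions is one-dimensional, i.e.\ the root subgroups are multiplicity-free with weights $1,2,3,\dots$, which is (b).

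The step I expect to be the main obstacle is the completeness input underlying both parts: in (a) that the $S$-action really yields the standard $\SL_2/T$ rather than a differently embedded copy (equivalently, in the amalgam formulation, that the bounded subgroup $S$ fixes a vertex of the Bass--Serre tree and is therefore conjugate into a factor), and in (b) that there are no root subgroups beyond the explicit list. Both reduce to the boundedness of unipotent (hence of algebraic) subgroups in $U(X)=\SO(3,\mathbb{C})*_{C_T}J_T$, i.e.\ to their being elliptic for the action on the Bass--Serre tree. I would make this precise through the standard length function on the amalgam, verifying that algebraic subgroups have bounded length; the classification of Lemma \ref{SL2} and the explicit weight computation then supply everything else.
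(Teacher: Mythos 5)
Your argument is correct, and for part (b) it follows essentially the same route as the paper: the explicit family $U_i\subset J_T$ of weight $i+1$, together with the amalgamated product structure $U(X)=\SO(3,\mathbb{C})\ast_{C_T}J_T$ to exclude further root subgroups. For part (a), however, you take a genuinely different and more complete route: the paper's published text gives no argument for conjugacy (the proposition is introduced only by ``summarizing everything said above''), whereas you derive it from Lemma \ref{SL2} by lifting the $S$-action to $\SL_2$, ruling out $A_{d,2}$ ($d>1$) by smoothness and $\mathbb{A}^2$, $\SL_2/N(T)$ by the Euler characteristic, and then composing the two equivariant isomorphisms; this is self-contained modulo the standard fact that a closed algebraic subgroup of $\Aut(X)$ acts morphically on $X$. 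Both your proof and the paper's rest on the same unproved input, namely that algebraic (in particular, torus-normalized unipotent) subgroups of the amalgam are bounded and hence conjugate into a factor; you at least name this explicitly and indicate how to settle it via the Bass--Serre tree, while the paper simply asserts that no other root subgroups exist. One point worth recording when you write this up: the Weyl group element of $\SO(3,\mathbb{C})$ normalizes $T''$ and inverts it, so conjugating $U_i$ by it yields root subgroups of weight $-(i+1)$ as well; hence the conclusion ``weights $1,2,3,\dots$'' must be read up to sign, i.e.\ exactly as you phrase it, each positive weight occurs with multiplicity one. This reading is the one actually used later when the proposition is compared with Corollary \ref{corollary2}.
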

\begin{proof}
$(a)$ Since  $\Aut(X)$ is the amalgamated
product of $\OO_3$ and $\J$  over their intersection we have that  by  \cite{Sr80}   any closed subgroup $S \subset \Aut(X)$ isomorphic to $S$  is conjugate to one of the factors $\OO_3$ or $\J$. Since all unipotent subgroups of $\J$ commute, $S$ can not be embedded into $\J$ and hence $S$ is conjugate to a subgroup of $\OO_3$, i.e., to $\SO_3$. The claim follows.
 
Now we are going to prove $(b)$. 
Let $U \subset \Aut(X)$ be a root subgroup with respect to $\tilde{T}$. This means that $\tilde{T} \ltimes U$ is an algebraic subgroup of $\Aut(X)$ and by \cite{Sr80}, $\tilde{T} \ltimes U$ is conjugate to a subgroup of either $\OO_3$ or $\J$. If $\tilde{T} \ltimes U$ is conjugate to a subgroup of $\OO_3$, then the weight of $U$ with respect to $\tilde{T}$ is either $1$ or $-1$, i.e., up to an automorphism of $\tilde{T}$ we can assume that the weight is $1$. If  $\tilde{T} \ltimes U$ is conjugate to a subgroup of $\J$, then without loss of generality we can assume that $\tilde{T} \ltimes U$ is  an algebraic subgroup  of 
$\J_{\le k}$ generated by elements of the form
$$
(x,y,z) \mapsto (\alpha x+2 \alpha yP(z) - \alpha z P^2(z),  (y-zP(z)), \frac{1}{\alpha}z) ;  \; \alpha \in \mathbb{C}^*, P \in \mathbb{C}[z]_{\le k}. 
$$
for some natural $k$, where $\mathbb{C}[z]_{\le k}$ denotes the polynomials of degree less or equal than $k$.
Moreover, since all tori in $\J_{\le k}$ are conjugate  we can assume that 
\[  \tilde{T} = \{ (tx,y,t^{-1}z) \mid t \in \mathbb{C}^*  \}.\]
By the following computation
\begin{align*}
(t x, y, t^{-1} z)  \circ  ( x+2  yP(z) &-  z P^2(z),  (y-zP(z)), z) \circ (t^{-1} x, y, t z) =  \\
 &= ( x+2 y tP(t z) -  z t^2 P^2(t z),  (y-z t P(t z)), z),
\end{align*}
it is easy to see  that
a root subgroup $U \subset \J_{\le k}$ should have the form
\[  U_i = \{  ( x+2 cyP_i(z) -  c^2z P_i^2(z),  (y-czP_i(z)), z) \mid c \in \mathbb{C}, \; P_i(z) = z^i     \}\] 
for some natural $i \le k$. Note that the root subgroup $U_i$ with respect to  $\tilde{T}$ has the
 weight $i+1$. The claim follows.
\end{proof}

\subsection{The structure of $\Aut(\SL_2/N)$.}

By Lemma \ref{centralizer}, there is an automorphism $\tau \in \Aut_{\SO_3}(X)$  and the quotient $Y = X/\langle \tau \rangle$ is isomorphic to $\SL_2 /N$.
In particular, $\mathcal{O}(Y ) = \mathcal{O}(X)^{\tau}$. An automorphism $\phi$ of $X$ descends to an automorphism on $Y$ if and only if  $\phi$ sends
$\langle \tau \rangle$-orbits	to	$\langle \tau \rangle$-orbits.	In fact,	such	an	automorphism induces the automorphism of $\mathcal{O}(X)$ that	sends $\langle \tau \rangle$ -invariant functions  of $ \mathcal{O}(X)$  to $\langle \tau \rangle$-invariant functions  of $ \mathcal{O}(X)$. 
 This condition for $\phi$ is equivalent to the condition that $\phi$ normalizes $\langle \tau \rangle$.
 Moreover,  since $\tau$ has order two, $\phi$ commutes with $\tau$.
  Recall that by $\Aut^{\langle \tau \rangle}(X)$ we denote  the  subgroup of those elements of $\Aut(X)$ that normalize $\langle  \tau \rangle$, but in this particular case  $\Aut^{\langle \tau \rangle}(X)$ is even the subgroup of those automorphisms of $\Aut(X)$ that commute with $\tau$.   
  
As we have mentioned above, $\phi \in \Aut(X)$     induces an automorphism of  $Y \simeq \SL_2/N$ if and only if $\phi \in \Aut^{\langle \tau \rangle}(X)$. On the other hand,
since $X \simeq \SL_2/T$ is simply connected and the quotient map $\pi\colon X \to X/\langle \tau \rangle = Y$ is  an  \'etale covering, 
  every automorphism $\varphi$ of $Y$ can be lifted to a continous analytical automorphism of $X$ and hence by \cite[Proposition 20]{Sr58}, $\varphi$ can be lifted to an automorphism $\tilde{\varphi}$ of $X$, i.e., $\tilde{\varphi} \in \Aut^{\langle \tau \rangle}(X)$.
  Hence, we have the surjective homomorphism $\Aut^{\langle \tau \rangle}(X) \to \Aut(Y)$ with the kernel $\langle \tau \rangle$  and so 
  \begin{equation}\label{quotient}
      \Aut(Y) \simeq \Aut^{\langle \tau \rangle}(X)/\langle \tau \rangle.
  \end{equation}

Observe that $\SO_3 \times \langle \tau \rangle$ is the subgroup of $\Aut^{\langle \tau\rangle}(X)$.
Define the subgroup $\J^{\langle \tau\rangle} \subset \Aut(X)$ of those automorphisms from $\J$ which normalize $\langle \tau\rangle$. It is not difficult to see that
$\J^{\langle \tau\rangle}$
is comprised of the following automorphisms:
\[
\{ (x,y,z) \mapsto (\alpha x+2 \alpha yP(z) - \alpha z P^2(z),  y-zP(z), \frac{1}{\alpha}z) ;  \; \alpha \in \mathbb{C}^*, P \in \bigoplus_{l=0}^{\infty} \mathbb{C}z^{2l}  \}.
\]
We have the following statement.

\begin{lemma}\label{amalgamatedproducttau}
The subgroup
$\Aut^{\langle \tau \rangle}(X) \subset \Aut(X)$ 
is the direct product of $\langle \tau \rangle$ and the amalgamated product of $\SO_3$ and $\J^{\langle \tau \rangle}$ along their intersection.
\end{lemma}
\begin{proof}
As we have mentioned above, $\Aut^{\langle \tau \rangle}(X)$ is the subgroup of those automorphisms of $\Aut(X)$ that commute with $\tau$. Assume $\phi \in \Aut(X)$ commutes with $\tau$. Since $\Aut(X)$ is the amalgamated product of 
$\SO_3 \times \langle \tau \rangle$ and $\J \rtimes \langle \tau \rangle$ one can write $\phi$ as a product $a_1\circ b_1 \circ \dots \circ a_k \circ b_k$, where $a_i \in \SO_3 \times \langle \tau \rangle$ and $b_i \in \J \rtimes  \langle \tau \rangle$.
 Further, because $\phi$ commutes with $\tau$, $\tau \phi \tau = \phi$ or equivalently,
 \[
 \tau \circ a_1\circ b_1 \circ \dots \circ a_k \circ b_k \circ \tau = a_1\circ b_1 \circ \dots \circ a_k \circ b_k.
 \]
Since $\tau$ commutes with $\SO_3$ one can rewrite this equation as follows:
  \[
 a_1\circ ( \tau \circ b_1 \circ \tau) \circ \dots \circ a_k \circ ( \tau \circ b_k \circ \tau)  = a_1\circ b_1 \circ \dots \circ a_k \circ b_k.
 \]
 From the amalgamated product structure of $\Aut(X)$ it follows that $\tau \circ b_i \circ \tau = c_i \circ b_i$ for some $c_i \in (\SO_3 \times \langle \tau \rangle) \cap (\J \rtimes \langle \tau \rangle)$. But this can happen only if $b_i$ commutes with $\tau$, i.e., $b_i \in \J^{\langle \tau \rangle} \times \langle \tau \rangle$. Therefore, $\Aut^{\langle \tau \rangle}(X)$ is generated by $\SO_3 \times \langle \tau \rangle$ and $\J^{\langle \tau \rangle} \times \langle \tau \rangle$.
 Moreover, $\Aut^{\langle \tau \rangle}(X) = \langle \SO_3,\J^{\langle \tau \rangle} \rangle \times \langle \tau \rangle$
 and by Proposition \ref{Neu}, $\langle \SO_3,\J^{\langle \tau \rangle} \rangle$
  is the amalgamated product of $\SO_3$ and $\J^{\langle \tau \rangle}$ over their intersection.  
\end{proof}

From Lemma \ref{amalgamatedproducttau} and \eqref{quotient} we have the following statement.

\begin{lemma}\label{amalgamatedSL_2/N}
The automorphism group $\Aut(Y)$ is isomorphic to the amalgamated product of
$\SO_3$ and $\J^{\langle \tau\rangle}$. In particular, $\Aut(Y) = \U(Y)$.
\end{lemma}

\begin{remark}
Lemma \ref{amalgamatedSL_2/N} can also be retrieved from   \cite[Remark 3.9]{KPZ17} and Remark \ref{SL2TP2} (see also \cite[(2.4.3;l)]{DG77}).
\end{remark}

\begin{corollary}\label{corollary2}
 We have the following properties.

$(a)$ All closed subgroups $S \subset \Aut(Y)$ isomorphic to $\PSL_2$ are conjugate. 

$(b)$ The root subgroups of $\Aut(Y)$ with respect to a maximal torus $\tilde{T}$ of any $S \simeq \PSL_2$
are multiplicity-free with weights $1,3,5,\dots$ up to an automorphism of $\tilde{T}$.   In  particular, $\U(\SL_2 /N)$ and $\U(\SL_2/T)$
are not algebraically isomorphic.
\end{corollary}
\begin{proof}
$(a)$ By Lemma \ref{amalgamatedSL_2/N}, $\Aut(Y)$
is the amalgamated product of $\SO_3$ and $\J^{\langle \tau\rangle}$ and by \cite{Sr80} any algebraic subgroup of the amalgamated product is conjugate to one of the factors. Since, $\J^{\langle \tau\rangle}$ does not contain a copy of $\PSL_2$ it follows that  $S$ is conjugate to a subgroup of $\SO_3$,  i.e., to $\SO_3$ itself.

$(b)$ Without loss of generality we can assume that $S$ equals $\SO_3$ and $\tilde{T} \subset \SO_3$ is the  subtorus of the form
\[  \{ (tx,y,t^{-1}z) \mid t \in \mathbb{C}^*  \}.\]
 Any root subgroup of $\Aut(Y)$ with respect to $\tilde{T}$ lifts to a root subgroup $U$ of $\Aut^{\langle \tau \rangle}(X)$
 (see  Lemma \ref{closedsubgroup}) with respect to the subtorus $p^{-1}(\tilde{T})^{\circ} \subset \Aut^{\langle \tau \rangle}(X)$.
 As it follows from the proof of Proposition \ref{proposition2}(b), $U$ coincides with 
  \[U_{2i} = \{  ( x+2  yP_i(z) -  z P_i^2(z),  (y-zP_i(z)), z) | P_i(z) = z^{2i}     \}\]
  for some $i \in \mathbb{N} \cup \{ 0 \}$.
  The weight of the root subgroup $U_{2i} \subset \Aut^{\langle \tau\rangle}(X)$ with respect to $p^{-1}(\tilde{T})^{\circ}$ is $2i+1$. Since 
  the kernel of $p^{-1}(\tilde{T}) \to \tilde{T}$ is trivial we have that the set of weights
  of root subgroups of $\Aut(Y)$ with respect to $\tilde{T}$ is  
  $\{ 2i+1 \mid i \in \mathbb{N} \}$. This proves the first part of the statement. 
  The second part follows because if there is an 
  algebraic isomorphism 
  $\varphi\colon \U(X) \to \U(Y)$, then $\varphi$ 
  maps root subgroups of $\U(X)$ with respect  to a subtorus $\tilde{T} \subset \U(X)$ to root subgroups of $\U(Y)$
  with respect to $\varphi(\tilde{T})$ that have the same weights. But as follows from Proposition \ref{proposition2} and the first part of this proof it is not the case. 
\end{proof}

\begin{remark}\label{centralizerSLN}
Analogously as in the proof of Lemma \ref{centralizer}, using amalgamated product structure of $\Aut(Y)$ described in Lemma \ref{amalgamatedSL_2/N}
we can show that the subgroup $\Aut_{\SO_3}(X) \subset \Aut(X)$ of those automorphisms that commute with $\SO_3$ is trivial.
\end{remark}


\subsection{On the automorphism group of $A_{d,2}$}

Recall that by Proposition \ref{filtration}, there is a surjective homomorphism $\phi_d \colon \Aut^{\mu_d}(\mathbb{A}^n) \to \Aut(A_{d,n})$ of groups. 
Consider now the maximal subtorus 
\[T_{n} = \{ (t_1x_1,\dots,t_nx_n) | t_i \in \mathbb{C}^{*} \} \subset \Aut(\mathbb{A}^n)
\]
and recall that by $T_n'$ we denote the subtorus of the form
\[
\{ (t_1x_1,\dots,t_nx_n) | t_i \in \mathbb{C}^{*}, t_1 \cdot \dots \cdot t_n = 1 \} \subset \U(\mathbb{A}^n)
\]
that has  dimension $n-1$. Then
 $T_{d,n}' = \phi_d(T'_n)$ is a maximal subtorus of $\U(A_{d,n}) \subset \Aut(A_{d,n})$.

\begin{lemma}\label{Lie+}
Let $U  \subset \Aut(A_{d,n})$ be a root subgroup with respect to $T_{d,n}'$ which has a  character $\chi$. Then $U$ lifts to a  root subgroup $\tilde{U} = (\phi_d^{-1}(U))^\circ \subset  \Aut^{\mu_d}(\mathbb{A}^n)$   with respect  to $T_n' = (\phi_d^{-1}(T_{d,n}'))^\circ$ with character   $\tilde{\chi} = \psi^*(\chi)$   such that the following diagram
$$\begin{CD} 
1 @>{}>>  \mu_{(n,d)} @>{}>> T_n' @>{\psi}>> T_{d,n}' @>{}>> 1 \\
@.  @.  @VV{\tilde{\chi}}V@VV{\chi}V \\
 @. @.  \mathbb{C}^*  @>{=}>>   \mathbb{C}^* 
\end{CD}
$$
 commutes, where $\psi = \phi_d|_{T_n'}$ and $\psi^*(\chi)$ is a pull-back of $\chi$.
\end{lemma}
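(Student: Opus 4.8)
The plan is to take $\tilde U := (\phi_d^{-1}(U))^0$ exactly as in the statement and verify, in order, that it is a one-dimensional unipotent subgroup mapping isomorphically onto $U$, that it is normalized by $T_n'$, and that its weight is $\psi^*(\chi)$; the commutativity of the diagram will then be immediate.

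First I would produce a concrete lift. By the last assertion of Proposition~\ref{filtration}, the one-dimensional unipotent subgroup $U\subset\Aut(A_{d,n})$ is the image under $\phi_d$ of some one-dimensional unipotent subgroup $V\subset\Aut^{\mu_d}(\mathbb{A}^n)$. Since $\ker\phi_d=\mu_d$ is finite and $V\cong\mathbb{C}^+$ contains no nontrivial finite subgroup, the intersection $V\cap\mu_d$ is trivial, so $\phi_d|_V\colon V\xrightarrow{\sim}U$ is an isomorphism of algebraic groups. As $\mu_d$ is the (normal) kernel, the full preimage is the finite union of cosets $\phi_d^{-1}(U)=\mu_d\cdot V$; in particular it is an algebraic subgroup of dimension one, whence $\tilde U=(\phi_d^{-1}(U))^0=V$. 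This already shows that $\tilde U$ is a one-dimensional unipotent subgroup and that $\phi_d$ restricts to an isomorphism $\tilde U\xrightarrow{\sim}U$.

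Next I would pin down the top row and show that $T_n'$ normalizes $\tilde U$. Because $\phi_d(T_n')=T_d$ and $\ker\psi=T_n'\cap\ker\phi_d=T_n'\cap\mu_d$, and because $\mu_d$ consists of the scalar matrices $\xi\cdot\id$ with $\xi^d=1$ while $T_n'$ is the determinant-one subtorus, one finds $\xi^d=\xi^n=1$, i.e. $\ker\psi\cong\mu_{(n,d)}$, which gives the exact sequence $1\to\mu_{(n,d)}\to T_n'\xrightarrow{\psi}T_d\to1$. For the normalization, fix $t\in T_n'$. Then $\phi_d(t)\in T_d$ normalizes $U$, so $\phi_d(t\,\tilde U\,t^{-1})=\phi_d(t)\,U\,\phi_d(t)^{-1}=U$; hence the connected group $t\,\tilde U\,t^{-1}$ lies in $\phi_d^{-1}(U)$ and contains the identity, so it is contained in $(\phi_d^{-1}(U))^0=\tilde U$, and equality follows since conjugation is an automorphism. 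Thus $\tilde U$ is a root subgroup with respect to $T_n'$. Finally, to compare weights, the differential $d\phi_d$ restricts to an isomorphism $\Lie\tilde U\xrightarrow{\sim}\Lie U$ intertwining the adjoint $T_n'$-action on $\Lie\tilde U$ with the $T_d$-action on $\Lie U$ read through $\psi$: for $t\in T_n'$ and $0\neq v\in\Lie\tilde U$ one has $\mathrm{Ad}(t)v=\tilde\chi(t)\,v$, and applying $d\phi_d$ gives $\chi(\psi(t))\,d\phi_d(v)=\tilde\chi(t)\,d\phi_d(v)$, whence $\tilde\chi=\chi\circ\psi=\psi^*(\chi)$, which is precisely the commutativity of the square.

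The only point that requires genuine care is the very first step: one must know that $\phi_d^{-1}(U)$ is an honest finite-dimensional algebraic subgroup of the ambient ind-group, so that its identity component is even defined. This is exactly what the presentation $\phi_d^{-1}(U)=\mu_d\cdot V$ as a finite union of translates of the one-dimensional $V$ secures. Once this is in hand, the normalization and the character identity are formal consequences of the $\mu_d$-equivariance of $\phi_d$ and of its differential, and no further computation is needed.
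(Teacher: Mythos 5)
Your proof is correct and follows essentially the same route as the paper's: lift $U$ to a one-dimensional unipotent subgroup upstairs using the earlier lifting results (the paper invokes Proposition~\ref{prop}, you invoke the last claim of Proposition~\ref{filtration}, which rests on Lemma~\ref{closedsubgroup}(c)), identify $(\phi_d^{-1}(U))^0$ with that lift, check that $T_n'$ normalizes it, and read off $\tilde\chi=\chi\circ\psi$ by transporting the conjugation action through $\phi_d$. You simply spell out the steps the paper leaves implicit (the decomposition $\phi_d^{-1}(U)=\mu_d\cdot V$ and the identification of $\ker\psi$ with $\mu_{(n,d)}$), so no substantive difference.
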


\begin{proof}  From Proposition \ref{prop} it follows that any root subgroup $U$ of $\Aut(A_{d,n})$ with respect to $T_{d,n}'$ lifts
 to a unipotent subgroup $\tilde{U} = (\phi_d^{-1}(U))^\circ$ of $\Aut^{\mu_d}(\mathbb{A}^n)$. Moreover, $\tilde{U}$ is normalized by $(\phi_d^{-1}(T_{d,n}'))^{\circ} = T'_n$. 
Now, let $\tilde{u} \in  \tilde{U}$ be a non-trivial element  and $u = \phi_d(\tilde{u}) \in U$.  We have group isomorphisms
\[
 \mathbb{C}^+ \xrightarrow{\sim} \tilde{U}, \; \;
 s  \mapsto \tilde{u}(s)
\]
and 
\[
 \mathbb{C}^+ \xrightarrow{\sim} U, \; \;
 s  \mapsto u(s).
\]
Now the proof follows from the formula
\[
\phi_d(\tilde{u}(\chi\circ\psi(t) s)) = u(\chi(t) s).
\]
\end{proof}

Observe that the homomorphism $\phi_d \colon \Aut^{\mu_d}(\mathbb{A}^n) \to \Aut(A_{d,n})$ induces the homomorphism  $\tilde{\phi}_{d} \colon \U^{\mu_d}(\mathbb{A}^n) \to \U(A_{d,n})$ which has the kernel $\mu_{(n,d)}$,  where 
$\U^{\mu_d}(\mathbb{A}^n) \subset \Aut^{\mu_d}(\mathbb{A}^n)$ is a subgroup generated by $\mathbb{C}^+$-actions.

In \cite{Hausen} it is proved that any  faithful action of an $(n-1)$-dimensional torus on an $n$-dimensional toric $T_Z$-variety $Z$ is conjugate to a subtorus of the big torus $T_Z$. 
 This result is used in order to prove the following lemma. 

\begin{lemma}\label{torus}
Let $T$  be an algebraic subtorus of $\U(A_{d,n})$ of dimension $(n-1)$. Then there exists an algebraic isomorphism $F\colon \U(A_{d,n})  \stackrel{\sim}{\rightarrow} \U(A_{d,n})$ such that 
$F(T) = T_{d,n}'$.
\end{lemma}
\begin{proof}
Since $T \subset \U(A_{d,n}) \subset \Aut(A_{d,n})$ is an algebraic subtorus of dimension $n-1$ and since $A_{d,n}$ is toric, by \cite[Theorem p. 2]{Hausen} there exists $\varphi \in \Aut(A_{d,n})$ such that $\varphi \circ T \circ \varphi^{-1} \subset T_{d,n}'$. Moreover, since $\U(A_{d,n})$ is a normal subgroup of $\Aut(A_{d,n})$, $\varphi \circ T \circ \varphi^{-1} \subset T_{d,n}'$ and hence since $\dim T_{d,n}' = n-1$,
$\varphi \circ T \varphi^{-1} = T_{d,n}'$. This proves that an algebraic isomorphism $F \colon \U(A_{d,n}) \to \U(A_{d,n})$, $\psi \mapsto \varphi \circ \psi \circ \varphi^{-1}$ maps $T$ to $T_{d,n}'$.
\end{proof}

Let $Z$ be an irreducible affine variety of dimension $n \ge 2$ and $\psi \colon \U(Z)  \stackrel{\sim}{\rightarrow}  \U(A_{d,n})$   be an algebraic isomorphism. Let 
$T$  be an $(n-1)$-dimensional algebraic subtorus of $\U(Z)$. Then, after composing $\psi$
with a suitable algebraic isomorphism $F\colon \U(A_{d,n}) \to \U(A_{d,n})$ (see  Lemma 
\ref{torus}), we can assume that $\psi(T) = T_{d,n}'$.

\begin{lemma}\label{rootweights} 
Root subgroups $U$ and $\psi(U)$ have the same weight characters with respect to $T$ and $\psi(T) = T_{d,n}'$ respectively, i.e., 
if $\chi \colon T_{d,n}' \to \mathbb{C}^*$
is the weight of $\psi(U)$, then
the weight of $U$ is $\chi \circ \psi$. 
\end{lemma}
\begin{proof}
Let $U$ be a root subgroup  of $\U(Z)$ with respect to $T$ and $\Lie U = \mathbb{C}  \nu$, where $\nu$ is a generator.  Then $\psi(U)$ is the root subgroup of $\U(A_{d,n})$
with respect to $T_{d,n}'$.  The  algebraic  isomorphism  $\psi$ induces an isomorphism $d \psi^{u}_e \colon \Lie U \to \Lie \psi(U)$. Note that the action of $T$ on $U$ induces the 
action of $T$ on $\Lie U$.  Then 
\[\psi(t) \circ d\psi^u_e(\nu)\circ \psi(t^{-1})=
\chi(\psi(t)) d\psi^u_e(\nu) 
 = d\psi^u_e(\chi(\psi(t))\nu)= d\psi^u_e(\chi \circ \psi(t)\nu),
\]
where $t \in T$.
On the other hand, 
\[
 \psi(t) \circ d\psi^u_e(\nu)\circ \psi(t^{-1})= d\psi^u_e(t \circ \nu \circ t^{-1}).
\]
The claim follows. 
\end{proof}

\begin{lemma}\label{rootseven}
Let $d$ be even. Then the set of weights of root subgroups of $\Aut(A_{d,2})$ with respect to $T_{d,2}'$ is $\{\frac{kd+2}{2} | \;  k \in \mathbb{N} \cup \{ 0 \} \}$ up to an automorphism of $T'_{d,2}$. 
\end{lemma}
\begin{proof}
By Proposition \ref{filtration}, any root subgroup of $\Aut(A_{d,2})$ with respect to $T_{d,2}'$ lifts to a root subgroup of $\Aut^{\mu_d}(\mathbb{A}^2)$
 with respect to $\phi_d^{-1}(T'_{d,2})^\circ = T_2'$. By Lemma \ref{Liendo}, any root subgroup of $\Aut(\mathbb{A}^2)$ with respect to $T_2'$ is equal either to
 $$U_s = \{ (x +c y^s,y) \mid c \in \mathbb{C} \}$$
 or to
 $$\tilde{U}_l = \{ (x,y+cx^l) \mid c \in \mathbb{C} \}$$
 for some $s,l \in \mathbb{N} \cup \{ 0 \}$.  Root subgroups $U_s$ and $\tilde{U}_l$ belong to $\Aut^{\mu_d}(\mathbb{A}^2)$ if and only if $s,l \in d\mathbb{N} + 1$. The weight of the action of $T_2' = \{ (cx,c^{-1}y) \mid c \in \mathbb{C}^* \}$
 on $U_s$ by $t \circ u \circ t^{-1}$, $t \in T_2'$ and $u \in U_s$, equals $s+1$. Analogously, the weight of
 $T_2'$-action on $\tilde{U}_l$ is $-l-1$.
 Therefore, the set of weights of root subgroups of $\Aut^{\mu_d}(\mathbb{A}^2)$ with respect to $T_2'$ is $\{kd+2 | \;  k \in \mathbb{N} \cup \{ 0 \} \}$ up to an automorphism of $T'_2$. Moreover,
 since the kernel of the map $\phi_d\colon T_2' \to T_{d,2}$ is $\mu_2$ as $d$ is even, the statment follows from Lemma \ref{Lie+}.
\end{proof}

By the Jung-Van der Kulk Theorem (see \cite{Ju42} and \cite{Kul53})
$\Aut(\mathbb{A}^2) = \Aff_2 \ast_{C} \J$, where $\Aff_2$ is the group of affine transformations of $\mathbb{A}^2$, 
\[\J = \{ (ax+c, by + f(x)) | a,b \in \mathbb{C}^*, c \in \mathbb{C}, f(y) \in \mathbb{C}[x] \}\] and $C = \Aff_2 \cap \J$.  
Now, the subgroup $\Aut^{\mu_d}(\mathbb{A}^2) \subset \Aut(\mathbb{A}^2)$  contains the standard $\GL_2 \subset \Aut(\mathbb{A}^2)$ and 
\[ \J_d = \{ (ax+c, b y + f(x)) \mid a,b \in \mathbb{C}^*, c \in \mathbb{C}, f(y) \in \bigoplus_{l\ge 0} \mathbb{C}x^{ld+1} \}.\]
By \cite[Theorem 4.2]{AZ13}, $\Aut(A_{d,2}) \simeq \Aut^{\mu_d}(\mathbb{A}^2)/\mu_d$ is the amalgamated product of  $\GL_2/\mu_d$ and $\J_d/\mu_d$ along their intersection. Moreover, as we will see in Lemma \ref{amalgamAd2} below, such an amalgamated product structure induces the amalgamated product structure of $\U(A_{d,2})$.
Denote by $\tilde{\J}_d$ the subgroup of $\J_d$ of the following form:
\[  \{ (ax+c, a^{-1} y + f(x)) \mid a \in \mathbb{C}^*, c \in \mathbb{C}, f(y) \in \bigoplus_{l\ge 0} \mathbb{C}x^{ld+1} \}\]
and by $\tilde{T}_{d,2}$  the one-dimensional subtorus of $\Aut(A_{d,2})$ induced by the $\mathbb{C}^*$-action on $\mathbb{A}^2$ given by the maps  $\{ (x,y) \mapsto (cx,y)\mid c \in \mathbb{C}^* \}$.
 We have the following statement.
\begin{lemma}\label{amalgamAd2}
The group $\Aut(A_{d,2})$ is the semidirect product $\U(A_{d,2}) \rtimes \tilde{T}_{d,2}$. 
Moreover, 
$\U(A_{d,2})$ is the amalgamated product of $\SL_2/(\mu_d \cap \SL_2)$
and $\tilde{\J}_d/(\mu_d \cap \tilde{\J}_d)$ along their intersection.
\end{lemma}
\begin{proof}
As $\Aut(A_{d,2}) \simeq \Aut^{\mu_d}(\mathbb{A}^2)/\mu_d$,  it is clear that $\Aut(A_{d,2})$ is generated by $\U(A_{d,2})$ and $\tilde{T}_{d,2}$. Further, the subgroup $\U(A_{d,2}) \subset \Aut(A_{d,2})$ is normal and the subgroups $\U(A_{d,2})$ and $\tilde{T}_{d,2}$ do not intersect. Indeed, if the
subgroups $\tilde{T}_{d,2}$ and $\U(A_{d,2})$ have a non-trivial intersection, then as $\Aut(A_{d,2}) \simeq \Aut^{\mu_d}(\mathbb{A}^2)/\mu_d$, 
the subgroups $\{ (x,y) \mapsto (cx,y)\mid c \in \mathbb{C}^* \}$ and 
$\U(\mathbb{A}^2)$ of $\Aut(\mathbb{A}^2)$ also have a non-trivial intersection which is not the case. Hence, we conclude that 
$\Aut(A_{d,2}) = \U(A_{d,2}) \rtimes \tilde{T}_{d,2}$.

Recall that  $\Aut(A_{d,2})$
 is the amalgamated product of 
 $\GL_2/\mu_d$ and $\J_d/\mu_d$ along their intersction and  $\GL_2/\mu_d = \SL_2/(\mu_d \cap \SL_2) \rtimes \tilde{T}_{d,2}$ and 
  $\J_d/\mu_d = \tilde{\J}_d/(\mu_d \cap \tilde{\J}_d) \rtimes \tilde{T}_{d,2}$. 
Since $\tilde{T}_{d,2}$ is contained in the intersection 
  $\GL_2/\mu_d \cap \J_d/\mu_d$, it follows that 
  \begin{equation}\label{formulaAd2} 
  \Aut(A_{d,2}) = \tilde{T}_{d,2} \ltimes  (\SL_2/(\mu_d \cap \SL_2) \ast_C  \tilde{\J}_d/(\mu_d \cap \tilde{\J}_d)),  
  \end{equation}
  where $C$ is the intersection of
  $A=\SL_2/(\mu_d \cap \SL_2)$ and $B=\tilde{\J}_d/(\mu_d \cap \tilde{\J}_d)$.
  As both $A$ and $B$
  are generated by unipotent subgroups it follows that  $A \ast_C B \subset \U(A_{d,2})$.   The other inclusion  follows from 
     \eqref{formulaAd2}.
 The claim follows.
\end{proof}

\begin{remark}
Define the homomorphism of abstract groups
\[ \Aut(A_{d,2}) =\tilde{T}_{d,2} \ltimes \U(A_{d,2}) \to \tilde{T}_{d,2} \]
by projection onto the first factor.  Such a homomorphism is a morphism of ind-groups which implies that $\U(A_{d,2}) \subset \Aut(A_{d,2})$ is a closed subgroup.
\end{remark}

\begin{remark}\label{evenodd}
  By Lemma \ref{amalgamAd2},
 $\U(\mathbb{A}_{d,2})$   is the amalgamated product of $\SL_2/(\mu_{d} \cap \SL_2)$ and $\tilde{\J}_{d}/(\tilde{\J}_{d} \cap \mu_{d})$  along their intersection.
 Note that if $d$ is even, then $\SL_2/(\mu_{d} \cap \SL_2)$
  is isomorphic to $\PSL_2$. If $d$ is odd, then  
  $\SL_2/(\mu_{d} \cap \SL_2)$
  is isomorphic to $\SL_2$.
\end{remark}

The following result was pointed out to me by \name{Hanspeter Kraft}.

\begin{proposition}\label{propHanspeter}
Let $Z$ be an irreducible affine normal variety of dimension $2$.

$(a)$ The groups  $\U(\SL_2 /T)$ and $\U(Z)$ are algebraically isomorphic if and only if   $Z \simeq \SL_2 /T$ or  
$Z \simeq A_{2,2}$.

$(b)$ The groups  $\U(\SL_2 /N)$ and $\U(Z)$ are algebraically isomorphic if and only if $Z \simeq \SL_2 /N$ or  
$Z \simeq A_{4,2}$.
\end{proposition}
\begin{proof}
  Let $X$ be isomorphic either to $\SL_2/T$ or to $\SL_2/N$. Then $\U(X)$ contains a copy of $\PSL_2$  (see Lemma \ref{amalgamT} and Lemma \ref{amalgamatedSL_2/N} respectively). Hence,
 by Lemma 
\ref{SL2}, $Z$ is isomorphic either to $\SL_2 /T$, to $\SL_2 /N$, or  to  $A_{d,2}$ for some  $d \in \mathbb{N}$. We claim that 
$Z$ can be isomorphic to $A_{d,2}$ 
only if $d$ is even. Indeed, assume that $\Aut(A_{d,2})$ contains an algebraic subgroup $S$ isomorphic to $\PSL_2$. Since $S \subset \U(A_{d,2})$ it follows from Lemma \ref{amalgamAd2} that   $S$ is conjugate either to a subgroup of $\SL_2/(\mu_d \cap \SL_2)$
or $\tilde{\J}_d/(\mu_d \cap \tilde{\J}_d)$ (see \cite{Sr80}). Moreover, since $\tilde{\J}_d/(\mu_d \cap \tilde{\J}_d)$ does not contain a copy of $\PSL_2$, $S$ should be conjugate to a subgroup of $\SL_2/(\mu_d \cap \SL_2)$. Hence, by
Remark \ref{evenodd} we conclude that $d$ is even.

  By Corollary \ref{corollary2} we have $\U(\SL_2 /T) \not\simeq \U(\SL_2 /N)$. Hence,
to prove $(a)$ we first need to show that an algebraic isomorphism
 $\phi\colon \U(A_{d,2}) \xrightarrow{\sim} \U(\SL_2/T)$ implies that $d=2$.
By Lemma \ref{rootseven},  the set of weights of root subgroups of $\U(A_{d,2})$ with respect to $T_{d,2}'$ is $\{\frac{kd+2}{2} | \;  k \in \mathbb{N} \cup \{ 0 \} \}$ up to an automorphism of $T'_{d,2}$.
Since $T_{d,2}$ is a subgroup of some $S \subset \U(A_{d,2})$ isomorphic to $\PSL_2$ we have by  Proposition \ref{proposition2} that the set of weights of root subgroups of $\U(X \simeq \SL_2/T)$ with respect to  $\phi(T_{d,2})$ 
is $\{ 1,2,3,\dots \}$ up to an automorphism of
$\phi(T_{d,2})$.
By Lemma \ref{rootweights}, the set of weights of root subgroups of $\U(A_{d,2})$ with respect to $T_{d,2}$ and of $\U(\SL_2/T)$ with respect to $\phi(T_{d,2})$ are equal. Therefore, $d$ indeed equals $2$. To finish the proof of $(a)$ we need 
to show that $\U(A_{2,2})$  and $\U(X \simeq \SL_2/T)$ are algebraically isomorphic.
To do so
we first note that by
Lemma \ref{amalgamAd2} and Lemma  \ref{amalgamT}, the  first factors $\SL_2/\mu_2$ and $\SO_3$  from the amalgamated product structure  of $\U(A_{2,2})$ and of $\U(X)$  respectively are isomorphic to $\PSL_2$.  
Moreover,  $\tilde{\J}_2$ and  $\J$ are algebraically isomorphic, as both $\tilde{\J}_2$ and  $\J$ are direct limits of isomorphic algebraic groups. Finally,  the intersections
$\SL_2/\mu_2 \cap \tilde{\J}_2 \subset \Aut(A_{2,2})$ and $\SO_3 \cap \J \subset \Aut(X)$ are also isomorphic  as algebraic groups as they are both isomorphic
to a Borel subgroup of $\PSL_2$.

Define a homomorphism $\varphi\colon  \U(A_{d,2}) \to \U(X)$ that sends isomorphically the first factor $\SL_2/\mu_2$ of the amalgamated product of $\U(A_{d,2})$ to the first factor $\SO_3$ of the amalgamated product of $\U(X)$ 
in a way that $\varphi(\SL_2/\mu_2 \cap \tilde{\J}_2) = \SO_3 \cap \J \subset \Aut(X)$
and 
the second factor $\tilde{\J}_2$ of the amalgamated product of $\U(A_{d,2})$ to the second factor $\J$ of the amalgamated product of $\U(X)$.  Such a map is well-defined and is an isomorphism as follows from the amalgamated product structure of  $\U(A_{2,2})$ and $\U(X \simeq \SL_2/T)$.
   The  proof of $(a)$ follows.

To prove $(b)$ we first need to show that an algebraic isomorphism
 $\phi\colon \U(A_{d,2}) \xrightarrow{\sim} \U(\SL_2/N)$ implies that $d=4$. 
 As we have already mentioned  above in this proof,  the set of weights of root subgroups of $\U(A_{d,2})$ with respect to $T_{d,2}'$ is $\{\frac{kd+2}{2} | \;  k \in \mathbb{N} \cup \{ 0 \} \}$ up to an automorphism of $T'_{d,2}$ (see  Lemma \ref{rootseven}).
 Further,
 analogously as in the first part of the proof  we have that the set of weights of  root subgroups of $\U(Y \simeq \SL_2/N)$ with respect to   $\phi(T_{d,2}')$ is $\{ 1,3,5,\dots \}$ up to an automorphism of $\phi(T_{d,2}')$ (see Corollary \ref{corollary2}).
 By Lemma \ref{rootweights}, the set of weights of root subgroups of $\U(A_{d,2})$ with respect to $T_{d,2}$ and of $\U(Y)$ with respect to $\phi(T_{d,2})$ coincide 
  which implies that $d = 4$. To finish the proof of $(b)$  we need to show that groups $\U(A_{d,2})$ and $\U(Y \simeq\SL_2/N)$ are algebraically isomorphic.
This follows analogously as in  the  previous  paragraph   in the case of
 groups $\U(A_{2,2})$  and $\U(X \simeq \SL_2/T)$.
\end{proof}

\section{Higher-dimensional case}

Consider the action of $\SL_n$ on $A_{d,n}$ induced by the standard $\SL_n$-action on $\mathbb{A}^n$.
 Denote by $S_{d,n} \subset \Aut(A_{d,n})$ the image of $\SL_n$ under the natural homomorphism $\SL_n \to \Aut(A_{d,n})$.

\begin{lemma}\label{lemma14}
 We have an isomorphism
$S_{d,n} \simeq \SL_n/\mu_{(d,n)}$,  where  $(d,n)$  denotes the greatest common divisor of $d$ and $n$. Moreover, 
$S_{d,n} \subset \U(A_{d,n})$.
\end{lemma}
\begin{proof}
 By Proposition \ref{filtration},  there is a surjective homomorphism $\phi_d \colon \Aut^{\mu_d}(\mathbb{A}^n) \to \Aut(A_{d,n})$ of groups with $\ker \phi_d = \mu_d$. Hence, 
$\Aut(A_{d,n}) \simeq \Aut^{\mu_d}(\mathbb{A}^n)/\mu_d$ which shows that $S_{d,n} \simeq  \SL_n/(\mu_{d} \cap \SL_n) \simeq \SL_n/\mu_{(d,n)}$. The second claim is clear since $S_{d,n}$ is generated by unipotent subgroups.
\end{proof}


\begin{lemma}\label{remark1}
If there is an injective algebraic homomorphism  \[
\varphi \colon S_{d,n}=\SL_n/\mu_{(n,d)} \hookrightarrow  \U(A_{l,n}),
\]then $(n, d) = (n, l)$.
In particular,
if  $\U(A_{d,n})$ and  $\U(A_{l,n})$ are algebraically isomorphic, then $(d,n) = (l,n)$.  
\end{lemma}
\begin{proof}
Applying Lemma \ref{torus} we can assume $\varphi(T_{d,n}) = T_{l,n}$. Hence, intersection $\varphi(S_{d,n}) \cap S_{l,n}$ contains $T_{l,n}$. We claim that $\varphi(S_{d,n}) = S_{l,n}$. To show this we first note that the subgroup 
$T_{l,n} \subset \varphi(S_{d,n})$ lifts to $T_n'$ and by Proposition \ref{filtration}, each root subgroup of $\varphi(S_{d,n})$ with respect to $T_{l,n}$ lifts to a one-dimensional unipotent subgroup of $\Aut(\mathbb{A}^n)$. Moreover, the subgroup $G$ of $\Aut(\mathbb{A}^n)$ generated by all one-dimensional unipotent subgroups $U_i$ lifted from root subgroups of $\varphi(S_{d,n})$ with respect to $T_{l,n}$  is algebraic subgroup of $\Aut(\mathbb{A}^n)$. Indeed, if $G$ is not algebraic, then $G$ can not be written as a finite product of $U_i$. In contrast,  $\varphi(S_{d,n})$ can be written as a finite product of root subgroups of $\varphi(S_{d,n})$ with respect to $T_{l,n}$.
Moreover, $\tilde{\phi}_d$ induces  a homomorphism of groups $G \to \varphi(S_{d,n})$ with a kernel $\mu_{(l,n)}$.

Hence,   a homomorphism of groups $G \to \varphi(S_{d,n})$ is a homomorphism of algebraic groups with the kernel $\mu_{(l,n)}$ 
and $G$ is isomorphic to $\SL_n$ that contains $T_n'$ as a maximal subtorus. It follows from \cite[Theorem 1.1]{KRZ17} that all subgroups of $\Aut(\mathbb{A}^n)$ isomorphic to $\SL_n$ are conjugate.  Therefore, $G$ is conjugate to the standard $\SL_n \subset \Aut(\mathbb{A}^n)$, i.e., there exists $\psi \in \Aut(\mathbb{A}^n)$ such that $\psi^{-1} \circ G \circ \psi = \SL_n$. Since $T_n' \subset G$ we have that $\psi^{-1} \circ T_n' \circ \psi$ is a subtorus in $\SL_n$ which implies that $\psi$ is a linear map that moreover belongs to $\GL_n$. Now it is easy to see that $G$ coincides with $\SL_n$. Therefore, $\varphi(S_{d,n}) = S_{l,n}$.

Therefore, $S_{d,n}$ is isomorphic to $S_{l,n}$ as an algebraic group. Hence, from Lemma \ref{lemma14} it follows that  $(d,n) = (l,n)$. The second part of the statement follows from the first one directly since $\U(A_{d,n})$ contains a copy of $S_{d,n}$.
\end{proof}

\begin{proposition}\label{normalcase}
Let $X$ be  $A_{d,n}$, $\SL_2/T$ or $\SL_2/N$  and  $Y$ be an irreducible affine variety.  Assume that  there is an algebraic isomorphism $\U(X) \xrightarrow{\sim} \U(Y)$. Then $\dim Y \le \dim X$. Moreover, if additionally $Y$ is normal, then

(a) if  $X \simeq \SL_2/T$, then $Y \simeq A_{2,2}$ or $Y \simeq \SL_2/T$,

(b) if  $X \simeq A_{2,2}$, then $Y \simeq A_{2,2}$ or $Y \simeq \SL_2/T$,

(c) if   $X \simeq \SL_2/N$, then $Y \simeq A_{4,2}$ or $Y \simeq \SL_2/N$,

(d) if   $X \simeq A_{4,2}$, then $Y \simeq A_{4,2}$ or $Y \simeq \SL_2/N$,

(e) if $X = A_{d,n}$, where $(d,n) \notin \{ (2,2), (2,4) \}$, $Y \simeq X$.
\end{proposition}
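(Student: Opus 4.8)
The plan is to transport the known structure of $U(X)$ through the given algebraic isomorphism $\phi\colon U(X)\xrightarrow{\sim}U(Y)$ and then to recognise $Y$ by the classification results already at hand. Throughout I use Lemma \ref{roothomo}: since $\phi$ is algebraic and bijective, it carries every algebraic subgroup of $U(X)$ generated by one-dimensional unipotent subgroups to an algebraic subgroup of $U(Y)$ by a bijective homomorphism of algebraic groups, hence (in characteristic zero) by an isomorphism; in particular $\phi$ sends tori to tori of the same dimension and root subgroups to root subgroups, matching weights via the induced lattice isomorphism on characters. First I would prove the inequality $\dim Y\le\dim X$, which needs no normality. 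Write $n=\dim X$ and let $T\subset U(X)$ be a maximal torus; in each of the three types it has dimension $n-1$ (it is $T_d$ for $A_{d,n}$, and a maximal torus of the relevant $\PSL_2$ for $\SL_2/T$ and $\SL_2/N(T)$). Put $T'=\phi(T)$, a torus of dimension $n-1$ in $\Aut(Y)$, and choose a root subgroup $U\subset U(X)$ with respect to $T$ for which $\mathcal O(X)^U$ is multiplicity-free, which exists by Proposition \ref{proposition2}(b) and Corollary \ref{corollary2} in the surface cases and, for $A_{d,n}$, by lifting to $\mathbb A^n$ through Lemma \ref{Lie+} and reading off the invariants of the $U_\lambda$ of Lemma \ref{Liendo}. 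The key point is that multiplicity-freeness survives $\phi$: a $T$-eigenfunction $f\in\mathcal O(X)^U$ of weight $\alpha$ yields the modified root subgroup $f\cdot U$, of weight $\alpha+\mathrm{wt}(U)$ and commuting with $U$, so $\mathcal O(X)^U$ is multiplicity-free exactly when for each weight there is at most one root subgroup among the modifications of $U$; this is a purely group-theoretic condition (commutation together with weight) that $\phi$ preserves. Hence $\mathcal O(Y)^{\phi(U)}$ is multiplicity-free and Lemma \ref{Kr15} gives $\dim Y\le\dim T'+1=n$.

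Now assume $Y$ normal and take the principal case $X=A_{d,n}$ with $n\ge 3$. Applying $\phi$ to $S_{n,d}\cong\SL_n/\mu_{(n,d)}\subset U(A_{d,n})$ produces an isomorphic algebraic subgroup $\phi(S_{n,d})\subset U(Y)$, so $Y$ carries a non-trivial $\SL_n$-action. Since every proper subgroup of $\SL_n$ has codimension at least $n-1$, a non-fixed point has orbit of dimension $\ge n-1$, and an orbit of dimension exactly $n-1$ would force the stabiliser to be a maximal parabolic and the orbit to be the complete variety $\mathbb P^{n-1}$, impossible inside an affine variety; therefore $\dim Y\ge n$, and with the bound above $\dim Y=n$. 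Proposition \ref{SLn} then gives $Y\cong A_{l,n}$ for some $l$, and Corollary \ref{remark1} yields $(l,n)=(d,n)$. To sharpen this to $l=d$ I would normalise the torus by Lemma \ref{torus} so that $T'=T_l$, compute via Lemmas \ref{Lie+} and \ref{Liendo} the weight systems of the root subgroups of $\Aut(A_{d,n})$ and $\Aut(A_{l,n})$ (with respect to $T_d$, resp.\ $T_l$), and observe that these systems must agree under $\phi$; because the congruence $\deg m_i\equiv 1\pmod d$ makes the weight system strictly finer as $d$ grows, the comparison forces $l=d$. This is conclusion (e).

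For the surface cases I would argue as follows. Conclusions (a) and (c) are exactly Proposition \ref{propHanspeter}(a),(b) applied to $X=\SL_2/T$ and $X=\SL_2/N(T)$. For $X=A_{2,2}$ (resp.\ $A_{4,2}$) I would compose $\phi$ with the algebraic isomorphism $U(A_{2,2})\cong U(\SL_2/T)$ (resp.\ $U(A_{4,2})\cong U(\SL_2/N(T))$) established in the proof of Proposition \ref{propHanspeter}, and invoke that proposition once more, giving (b) and (d). The remaining normal surfaces, $X=A_{d,2}$ with $d\notin\{2,4\}$, fall under (e): as before $\dim Y=2$ and the transferred subgroup $\phi(S_{2,d})$ is isomorphic to $S_{2,d}$, which is $\SL_2$ for $d$ odd and $\PSL_2$ for $d$ even, so Lemma \ref{SL2} leaves only $A_{l,2}$, $\SL_2/T$, $\SL_2/N(T)$. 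Lemma \ref{lemma2} excludes $\SL_2/T$ and $\SL_2/N(T)$ for odd $d$ (their $\SL_2$-image is $\PSL_2$, not $\SL_2$), while the weight descriptions of Lemma \ref{rootseven} and Corollary \ref{corollary2} exclude them for even $d\neq 2,4$, leaving $Y\cong A_{l,2}$ with $l=d$ by the same weight comparison.

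The main obstacle, as I see it, is twofold. The first is making the transfer of multiplicity-freeness across $\phi$ rigorous, that is, identifying it with the commutation-and-weight condition on modifications of $U$ that is manifestly $\phi$-invariant; this is the linchpin of the dimension bound. The second, in case (e), is upgrading $(l,n)=(d,n)$ to the exact equality $l=d$: one must show that the weight multiset of the root subgroups with respect to the maximal torus determines $d$ up to the lattice automorphism induced by $\phi|_{T'}$, which is precisely where the $\mu_d$-congruence of Lemmas \ref{Liendo} and \ref{Lie+} has to be exploited.
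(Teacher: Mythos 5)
Your proposal is correct and follows essentially the same route as the paper: transfer the $(n-1)$-dimensional torus and the multiplicity-free system of root subgroups through the algebraic isomorphism, apply Lemma \ref{Kr15} for the dimension bound, delegate the surface cases to Proposition \ref{propHanspeter}, and settle case (e) via the transferred $\SL_n$-action, Proposition \ref{SLn}, Corollary \ref{remark1}, Lemma \ref{torus} and the weight comparison of Lemmas \ref{Lie+} and \ref{rootweights}. Your additions (the lower bound $\dim Y \ge n$ needed to invoke Proposition \ref{SLn}, and the explicit exclusion of $\SL_2/T$ and $\SL_2/N(T)$ when $n=2$ and $d \notin \{2,4\}$) are details the paper leaves implicit rather than a different argument.
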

\begin{proof}
 Fix an algebraic isomorphism $\psi \colon \U(X)  \stackrel{\sim}{\rightarrow}  \U(Y)$ and denote by $T'$ the image of $T_{d,n}'$ if $X = A_{d,2}$ or the image of a maximal subtorus $T$ of  $\U(X)$ if $X = \SL_2/T$ or $\SL_2/N$.
 By Lemma \ref{Lie+}, Proposition \ref{proposition2} and Corollary \ref{corollary2},  all root subgroups $U \subset \U(Y)$ with respect to $T'$ have different weights. In particular, the root subgroups  $\mathcal{O}(Y)^U \cdot U \subset \U(Y)$ have different weights, which implies that $\mathcal{O}(Y)^U$ is multiplicity-free, because the map $\mathcal{O}(Y)^U \to \mathcal{O}(Y)^U \cdot U$ is injective. Hence, by Lemma \ref{Kr15},  we have that  $$\dim Y \le \dim T' + 1 = n,$$ which proves the first part of the proposition.

Now (a), (b), (c) and (d) follow from Proposition \ref{propHanspeter}.

To prove (e),  we note that   $\U(A_{d,n})$ contains a copy of $\SL_n/\mu_{(n,d)}$ which implies that $\SL_n$ acts non-trivially on $Y$ and thus, by Proposition \ref{SLn-action}, Lemma \ref{SL2} and  Proposition \ref{normalcase} (a)-(d), 
$Y \simeq A_{l,n}$ for some $l \in \mathbb{N}$. Hence, $\psi \colon \U(A_{d,n})  \stackrel{\sim}{\rightarrow}  \U(A_{l,n})$. 
By Lemma \ref{torus},  there exists an algebraic isomorphism 
$F \colon \U(A_{l,n})  \stackrel{\sim}{\rightarrow} \U(A_{l,n})$ such that 
$F( \psi(T_{d,n}'))  =  T_{l,n}'$. Therefore, we can assume that $\psi(T_{d,n}') = T_{l,n}'$.

Consider the $\mathbb{C}^+$-action $$\{ (x_1 +c x_2^{d+1},x_2,\dots,x_n) \mid c \in \mathbb{C} \} \text{ on } \mathbb{A}^n.$$ It induces the $\mathbb{C}^+$-action $U$ on $A_{d,n}$ which is  normalized
by $T_{d,n}'$. Hence, $\psi(U) \subset \U(A_{l,n})$ is a root subgroup
with respect to $\psi(T_{d,n}') = T_{l,n}'$. 
By Proposition \ref{filtration}, 
$\psi(U)$ lifts to a $\mathbb{C}^+$-action on $\mathbb{A}^n$ normalized by 
$T'_n$.
  Since $(n,d) = (n,l)$ by Lemma \ref{remark1} and because  $U$ and $\psi(U)$ have the same weight characters with respect to 
 $T_{d,n}'$ and $T_{l,n}'$ respectively (see Lemma \ref{rootweights}), 
 Lemma \ref{Lie+} implies that  $\psi(U)$ lifts to  a root subgroup
 $$\{ (x_1 +c x_2^{d+1},x_2,\dots,x_n) \mid c \in \mathbb{C} \}$$ of  $\Aut(\mathbb{A}^n)$ with respect to $T_l'$. Therefore, $l \le d$. Analogously,  $d \le l$, i.e., $d =l$. The proof follows.
\end{proof}

\begin{proof}[Proof of Theorem \ref{main4}]
Let $\psi \colon \U(X) \xrightarrow{\sim} \U(Y)$ be an algebraic isomorphism.  Proposition \ref{normalcase} implies that $\dim Y \le \dim X$.  
Since $\SL_n$ acts regularly and non-trivially on $X$,   $\SL_n$
also acts non-trivially and regularly on $Y$.

First, let  $X$ be isomorphic to $A_{d,n}$. 
Then by  Lemma \ref{SL2} and by Proposition \ref{SLn-action},   the normalization of $Y$, which we denote by $\tilde{Y}$, is isomorphic to $\SL_2/T$, $\SL_2/N$ or $ A_{l,n}$ for some $l \ge 1$. 
First, assume that $\tilde{Y} \simeq A_{l,n}$.
Hence,  Proposition \ref{SLn-action} implies that 
$$\mathcal{O}(Y) =  \sum_{i = 1,\dots,l} \bigoplus_{k \ge 0}    \mathbb{C}[x_1,\dots,x_n]_{k d_i}
$$
for some     
 $d_1,\dots,d_l \in \mathbb{N}$, where $(d_1,\dots,d_l) = l$

Let $\eta \colon A_{l,n} \to Y$  be the normalization morphism, which by Lemma \ref{closedembedding} induces the algebraic homomorphism $\tilde{\eta} \colon \U(Y)  \hookrightarrow  \U(A_{l,n})$. 
Note that $\SL_n/\mu_{(n,d)}$ acts faithfully on $X$. Then $\SL_n/\mu_{(n,d)}$ also acts faithfully on $Y$ and therefore on $A_{l,n}$. Hence, by Lemma \ref{remark1} we have that $(n,d) = (n,l)$.

Consider the $\mathbb{C}^+$-action \[
\{ (x_1 +c x_2^{d+1},x_2,\dots,x_n) \mid c \in \mathbb{C} \} \text{ on } \mathbb{A}^n.
\]
It induces the $\mathbb{C}^+$-action $U$ on $A_{d,n}$ which is  normalized
by $T_{d,n}'$. Hence, $\psi(U) \subset \U(Y)$ is a root subgroup
with respect to $\psi(T_{d,n}')$. By Lemma \ref{torus} there is an algebraic isomorphism $\U(A_{d,n})\xrightarrow{\sim} \U(A_{d,n})$ that maps $T_{d,n}'$ to  $\psi^{-1}(\tilde{\eta}^{-1}(T_{l,n}'))$ and so there is an isomorphism  $\U(Y)\xrightarrow{\sim} \U(Y)$ that maps $\psi(T_{d,n}')$ to  $\tilde{\eta}^{-1}(T_{l,n}')$. Hence, we can assume that
$\psi(U)$  is a root subgroup with respect to 
 $\tilde{\eta}^{-1}(T_{l,n}')$. 
By Lemma \ref{closedembedding}, $\psi(U)$ lifts to 
a $\mathbb{C}^+$-action on $A_{l,n}$ which is normalized by $T_{l,n}'$ and then by Lemma \ref{closedsubgroup}(c),
$\psi(U)$ lifts to a $\mathbb{C}^+$-action on $\mathbb{A}^n$ normalized by 
$T'_n$.
  Since $(n,d) = (n,l)$ and because  $U$ and $\psi(U)$ have the same weight characters with respect to 
$\tilde{\eta}^{-1}(T_{l,n}')$ and $T_{d,n}'$ respectively (i.e., if $\chi \colon T_{d,n}' \to \mathbb{C}^*$
is the weight of $\psi(U)$, then
the weight of $U$ is $\chi \circ \psi$), 
 Lemma \ref{Lie+} implies that  $\psi(U)$ lifts to  a root subgroup
 $$\{ (x_1 +c x_2^{d+1},x_2,\dots,x_n) \mid c \in \mathbb{C} \}$$ of  $\Aut(\mathbb{A}^n)$ with respect to $T_n'$.
Hence,  $$\{ (x_1 +c x_2^{d+1},x_2,\dots,x_n) \mid c \in \mathbb{C} \} \text{ induces  an action on } \mathcal{O}(\mathbb{A}^n)^{\mu_l}$$ which 
implies that $l \mid d$. Moreover, 
 $\{ (x_1 +c x_2^{d+1},x_2,\dots,x_n) \mid c \in \mathbb{C} \}$  induces an action on $\mathcal{O}(Y)$. This implies that 
 $$d + l_i \in \mathbb{N}l_1 + \dots + \mathbb{N}l_s$$
 for any $i$.
 
 The $\mathbb{C}^+$ action on $\mathbb{A}^n$ of the form
 $\{ (x_1 +c x_2^{l_i+1},x_2,\dots,x_n) \mid c \in \mathbb{C} \}$ 
induces an action on $\mathcal{O}(Y)$. Since $(d,n) = (l,n)$ it follows that  
$$\{ (x_1 +c x_2^{l_i+1},x_2,\dots,x_n) \mid c \in \mathbb{C} \} 
\text{ induces an action on } \mathcal{O}(\mathbb{A}^n)^{\mu_d}.$$
Hence, $d \mid l_i$ for any $i$ and then $d \mid (l_1,\dots,l_s) = l$.
Therefore, $d = l$. Now, because $d \mid l_i$ for any $i$, $d + l_i \in \mathbb{N}l_1 + \dots + \mathbb{N}l_s$ implies that 
$$ \mathbb{N}l_1 + \dots + \mathbb{N}l_s = \mathbb{N}_{\ge \min_i \{l_i | i =1,\dots,l \}}  d,$$ where 
$\mathbb{N}_{\ge k} = \{ m \in \mathbb{N} | m \ge k \}$.

Now assume that $\tilde{Y}$ is isomorphic to $\SL_2/T$ or to $\SL_2/N$, then by Proposition  \ref{orbits}, $Y = \tilde{Y}$. Then  (e) follows from Proposition \ref{propHanspeter}.

Let now  $X \simeq \SL_2/T$. Then by Lemma \ref{SL2}, $\tilde{Y}$ can only be isomorphic to  $\SL_2/T$, $\SL_2/N$ or 
$A_{2,2}$. 
 By Proposition \ref{propHanspeter}, $\tilde{Y}$ is isomorphic to $\SL_2/T$ or to $A_{2,2}$.
 If $\tilde{Y} \simeq  \SL_2/T$, from Proposition  \ref{orbits}, it follows that $Y = \tilde{Y}$. Hence, (b) follows from the first part of the proof. Analogously follows (d).
\end{proof}


\begin{proof}[Proof of Theorem \ref{main2}]
 The isomorphism $\Aut(X) \xrightarrow{\sim} \Aut(A_{d,n})$
 induces an algebraic isomorphism $\U(X) \xrightarrow{\sim} \U(A_{d,n})$. Note that $X$ admits a torus action of dimension $n$.  From Theorem \ref{main4} it follows that $X$ can only be isomorphic to
 $A_{d,n}^s$. On the other hand, since normalization of $A_{d,n}^s$ is equal to $A_{d,n}$, it follows from Lemma \ref{closedembedding} that there is a closed embedding 
  $\Aut(A_{d,n}^s) 	\hookrightarrow \Aut(A_{d,n})$ of ind-groups. Now the proof  follows from \cite[Proposition 9.1(3)]{RvS19}.
\end{proof}

\begin{proof}[Proof of Theorem \ref{main3}]
Let $Z$ be isomorphic either to $\SL_2/T$ or to $\SL_2/N$. Then an isomorphism $\Aut(X) \xrightarrow{\sim}  \Aut(Z)$
 induces an algebraic isomorphism $\U(X) \xrightarrow{\sim}  \U(Z)$. 
 By Theorem  \ref{main4}, $X$ is isomorphic either to $Z$ or to $A_{2k,2}^s$ for some $s \in \mathbb{N}$ and $k \in \{ 1,2 \}$. To finish the proof we need to show that $\Aut(Z)$ can not be isomorphic to $\Aut(A_{2k,2}^s)$. But this is clear as all $A_{2k,2}^s$ admit an action of a two-dimensional torus and varieties $\SL_2/T$ and $\SL_2/N$ do not admit such an
action.
\end{proof}

\vskip1cm

\end{document}